\newtheorem{theorem}{Theorem}[section]
\newtheorem{lemma}[theorem]{Lemma}
\theoremstyle{definition}
\newtheorem{definition}[theorem]{Definition}
\newtheorem{proposition}[theorem]{Proposition}
\theoremstyle{remark}
\newtheorem{remark}[theorem]{Remark}
\newtheorem{example}[theorem]{Example}
\begin{document}

\title
{on paratopological gyrogroups}
\author{Ying-Ying Jin}\thanks{}
\address{(Y.Y. Jin) School of Mathematics and Computational Science, Wuyi University, Jiangmen 529020, P.R. China} \email{yingyjin@163.com}

\author{Li-Hong Xie*}\thanks{* The corresponding author.}
\address{(L.H. Xie) School of Mathematics and Computational Science, Wuyi University, Jiangmen 529020, P.R. China} \email{xielihong2011@aliyun.com}


\thanks{
This work is supported by Natural Science Foundation of China (Grant Nos. 11861018, 11871379, 11526158), the Natural Science Foundation of Guangdong
Province under Grant (No.2018A030313063), The Innovation Project of Department of Education of Guangdong Province  (No:2018KTSCX231), and key project of Natural Science Foundation of Guangdong Province Universities (2019KZDXM025).}

\subjclass[2010]{22A30; 22A22,20N05,54H99}

\keywords{Paratopological gyrogroup; separation axioms; completely regular; Pontrjagin conditions}

\begin{abstract}
 The concept of gyrogroups is a generalization of groups which
do not explicitly have associativity. Recently, Atiponrat extended the idea of topological (paratopological) groups to topological (paratopological) gyrogroups. In this paper, we prove that every regular (Hausdorff) locally gyroscopic invariant paratopological gyrogroup $G$ is completely regular (function Hausdorff). These results improve theorems of Banakh and Ravsky for paratopological groups. Also, we extend the Pontrjagin conditions of (para)topological groups to (para)topological gyrogroups.
\end{abstract}

\maketitle

\section{Introduction}
The concept of gyrogroup was discovered by A. A. Ungar when he
studied the Einstein velocity addition \cite{Ung}.
A gyrogroup is a generalization of a group in the sense that it is a groupoid with an identity and inverses,
but the associative law is redefined by more general definitions which are the left gyroassociative law and
the left loop property.
The gyrogroup does not form a group since it is neither associative nor commutative.
Nevertheless, Ungar showed that gyrogroup is rich in structure and
encodes a group-like structure, namely the gyrogroup structure \cite{Ung}.

Many important characteristics of gyrogroups have been intensively studied in \cite{Fer1}, \cite{Fer2}, \cite{Suk1}, \cite{Suk2} and \cite{Suk3}.
T. Suksumran and K. Wiboonton have been studying some basic algebraic properties of gyrogroups, for example, the isomorphism theorems, Cayley's Theorem, Lagrange's Theorem, the
gyrogroup actions, etc. in \cite{Suk1}, \cite{Suk2} and \cite{Suk3}.
Most of these properties are similar to those in classical group theory.

As far as we known, Atiponrat \cite{Atip} is the first scholar who extended the idea of topological groups to topological gyrogroups as gyrogroups with a topology such that its binary operation is jointly continuous and the operation of taking the inverse is continuous.
 Some basic properties of topological gyrogroups are studied by  and studied by Atiponrat \cite{Atip}. Specially, Atiponrat \cite{Atip} discovered that for a topological gyrogroup, $T_0$ and $T_3$ are equivalent. It is worth noting that Z. Cai, S. Lin and W. He in \cite{Cai} proved that every Hausdorff first countable topological gyrogroup is metrizable.

Recently, Atiponrat and Maungchang \cite{Atip1} studied some separation axioms of paratopological gyrogroups. They proved that:

\begin{theorem}\cite[Theorem 3]{Atip1}\label{Th1}
Let $G$ be a micro-associative regular paratopological gyrogroup such
that for any neighborhoods $A, B$ of the identity $e$ of $G$, $\overline{A}\subseteq \text{Int~}\overline{B\oplus A}$. Then $G$ is completely regular.
\end{theorem}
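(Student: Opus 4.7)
The plan is to adapt the classical Urysohn-type construction of a real-valued function on $G$, using a dyadic tower of open neighborhoods of the identity and invoking the hypothesis $\overline{A}\subseteq\mathrm{Int}\,\overline{B\oplus A}$ to recover the semicontinuity that the absence of inverse continuity (and of global associativity) would otherwise destroy. By homogeneity---left translations $L_a\colon x\mapsto a\oplus x$ are homeomorphisms of a paratopological gyrogroup, with inverse $L_{\ominus a}$---it suffices to separate $e$ from an arbitrary closed set $F\not\ni e$ by a continuous function $G\to[0,1]$. Using regularity of $G$, I would fix an open neighborhood $U_1$ of $e$ with $\overline{U_1}\cap F=\emptyset$.

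Next I would build a dyadic tower of open neighborhoods $(U_r)_{r\in D}$ of $e$, where $D$ is the set of dyadic rationals in $(0,1]$. Starting from $U_1$, joint continuity of $\oplus$ at $(e,e)$ yields inductively open neighborhoods $U_{1/2^n}$ of $e$ with $U_{1/2^n}\oplus U_{1/2^n}\subseteq U_{1/2^{n-1}}$. For $r=k/2^n\in D$, let $U_r$ be an iterated sum of $k$ copies of $U_{1/2^n}$, the brackets associated in a fixed way. Micro-associativity of $G$ is precisely what makes this definition coherent---independent of bracket placement in the sense needed---and delivers the monotonicity $U_r\oplus U_s\subseteq U_{r+s}$ whenever $r+s\leq 1$.

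Then I would define $f\colon G\to[0,1]$ by $f(x)=\inf\{r\in D:x\in\overline{U_r}\}$, with the convention $\inf\emptyset=1$. Thus $f(e)=0$ and $f\equiv 1$ on $F$, since $\overline{U_1}\cap F=\emptyset$. Upper semicontinuity is immediate: $\{f\leq r\}=\bigcap_{s\in D,\,s>r}\overline{U_s}$ is closed. For lower semicontinuity, I would apply the closure hypothesis with $A=U_r$ and $B=U_{r'-r}$ to obtain $\overline{U_r}\subseteq\mathrm{Int}\,\overline{U_{r'-r}\oplus U_r}\subseteq\mathrm{Int}\,\overline{U_{r'}}$ for any $r<r'$ in $D$; hence $\{f<r\}=\bigcup_{s\in D,\,s<r}\overline{U_s}=\bigcup_{s\in D,\,s<r}\mathrm{Int}\,\overline{U_s}$ is open.

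The main obstacle I anticipate is the combinatorial bookkeeping of non-associativity against the dyadic tower: one must verify that the iterated sums defining $U_r$ are unambiguous (up to the containments $U_r\oplus U_s\subseteq U_{r+s}$) and that the closure hypothesis can be invoked uniformly in the bracketing used inside the tower. Micro-associativity is designed to handle these coherence issues; once it is deployed, the remaining analytic content is essentially the Banakh--Ravsky argument for paratopological groups, transplanted to the gyrogroup setting.
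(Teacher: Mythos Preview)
The paper does not give its own proof of this statement; Theorem~1.1 is quoted verbatim from \cite{Atip1}. The closest the paper comes is the proof of Proposition~3.1 (the analogous result under the ``locally gyroscopic invariant'' hypothesis), together with Lemma~3.4 and Theorem~3.5, which show that the closure hypothesis $\overline{A}\subseteq\mathrm{Int}\,\overline{B\oplus A}$ in Theorem~1.1 is in fact superfluous.

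Your outline is essentially the same Urysohn construction that appears in the proof of Proposition~3.1: a dyadic tower of neighborhoods $V(m/2^n)$ of $e$, the three key facts (additivity $V(m/2^n)\oplus V(1/2^n)\subseteq V((m+1)/2^n)$, monotonicity, and $\overline{V(r_1)}\subseteq\mathrm{Int}\,\overline{V(r_2)}$), and the function $f(y)=\inf\{r:y\in\overline{V(r)}\}$. Two differences are worth noting. First, the paper defines the dyadic sets recursively via $V((2m+1)/2^{n+1})=V(m/2^n)\oplus U_{n+1}$ rather than as ``an iterated sum of $k$ copies of $U_{1/2^n}$''; your description leaves a coherence issue (when $k/2^n=2k/2^{n+1}$ the two iterated sums must be compared), which the recursive definition sidesteps. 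Second, and more interestingly, for the step $\overline{V(r_1)}\subseteq\mathrm{Int}\,\overline{V(r_2)}$ you invoke the hypothesis $\overline{A}\subseteq\mathrm{Int}\,\overline{B\oplus A}$ directly, whereas the paper observes (see the proof of (3) in Proposition~3.1 and Lemma~3.4) that this follows for free from the fact that $\overline{V(m/2^n)}\oplus V(1/2^n)$ is already open by Proposition~2.6. That is exactly why the paper can then state Theorem~3.5, dropping the closure hypothesis altogether. Your argument is correct for the theorem as stated, but it does not exploit this simplification.
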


Banakh and Ravsky \cite{Ban} showed that every regular paratopological group is completely regular.
Hence they gave a generalization to Banakh and Ravsky's result.

In this paper, we introduce the concept of locally gyroscopic invariantness in paratopological gyrogroup and prove that every regular (Hausdorff) locally gyroscopic invariant paratopological gyrogroup $G$ is completely regular (function Hausdorff). Also, we find that the condition``for any neighborhoods $A, B$ of the identity $e$ of $G$, $\overline{A}\subseteq \text{Int}\overline{B\oplus A}$" in Theorem \ref{Th1} can be dropped. Finally, we also extend the Pontrjagin conditions of paratopological groups to paratopological gyrogroups.

All spaces are not assumed to satisfy any separation axiom unless otherwise stated.

$\bullet$ A space $X$ is {\it regular} if for any element $x\in X$ and an open set $U$ containing $x$, there is a neighborhood $V$ of $x$ such that $\overline{V}\subseteq U$.

$\bullet$ A space $X$ is {\it completely regular} if for any element $x\in X$ and an open set $U$ containing $x$, there is a continuous function $f:X\rightarrow [0,1] $ such that $f(x)=0$ and $f^{-1}([0,1))\subseteq U$.

$\bullet$ A space $X$ is {\it functionally Hausdorff} if for any distinct points $x, y \in X$ there exists a continuous function $f : X \rightarrow \mathbb{R}$ such that $f (x)\neq f (y)$.

\section{Definitions and preliminaries}

Let $G$ be a nonempty set, and let $\oplus  : G  \times G \rightarrow G $ be a binary operation on $G $. Then the pair $(G, \oplus)$ is
called a {\it groupoid.}  A function $f$ from a groupoid $(G_1, \oplus_1)$ to a groupoid $(G_2, \oplus_2)$ is said to be
a groupoid homomorphism if $f(x_1\oplus_1 x_2)=f(x_1)\oplus_2 f(x_2)$ for any elements $x_1, x_2 \in G_1$.  In addition, a bijective
groupoid homomorphism from a groupoid $(G, \oplus)$ to itself will be called a groupoid automorphism. We will write $Aut (G, \oplus)$ for the set of all automorphisms of a groupoid $(G, \oplus)$.
\begin{definition}\cite[Definition 2.7]{Ung}\label{Def:gyr}
 Let $(G, \oplus)$ be a nonempty groupoid. We say that $(G, \oplus)$ or just $G$
(when it is clear from the context) is a gyrogroup if the followings hold:
\begin{enumerate}
\item[($G1$)] There is an identity element $e \in G$ such that
$$e\oplus x=x=x\oplus e \text{~~~~~for all~~}x\in G.$$
\item[($G2$)] For each $x \in G $, there exists an {\it inverse element}  $\ominus x \in G$ such that
$$\ominus x\oplus x=e=x\oplus(\ominus x).$$
\item[($G3$)] For any $x, y \in G $, there exists an {\it gyroautomorphism} $\text{gyr}[ x, y ] \in Aut( G,  \oplus)$ such that
$$x\oplus (y\oplus z)=(x\oplus y)\oplus \text{gyr}[ x, y ](z)$$ for all $z \in G$;
\item[($G4$)] For any $x, y \in G$, $\text{gyr}[ x \oplus y, y ] = \text{gyr}[ x, y ]$.
\end{enumerate}
\end{definition}

In this paper, $\text{gyr}[a,b]V$ denotes $\{\text{gyr}[a,b]v: v\in V\}$.

The following Proposition \ref{Pro:gyr} below summarizes some algebraic properties of gyrogroups
\begin{proposition}(\cite{Ung1, Ung})\label{Pro:gyr}
Let $(G,\oplus)$ be a gyrogroup and $a,b,c\in G$. Then
\begin{enumerate}
\item[(1)] $\ominus(\ominus a)=a$ \hfill{Involution of inversion}
\item[(2)] $\ominus a\oplus(a\oplus b)=b$ \hfill{Left cancellation law}
\item[(3)] \text{gyr}$[a,b](c)=\ominus(a\oplus b)\oplus(a\oplus(b\oplus c))$ \hfill{Gyrator identity}
\item[(4)] $\ominus(a\oplus b)=\text{gyr}[a,b](\ominus b\ominus a)$\hfill{\text{cf.~}$(ab)^{-1}=b^{-1}a^{-1}$}
\item[(5)] $(\ominus a\oplus b)\oplus \text{gyr}[\ominus a,b](\ominus b\oplus c)=\ominus a\oplus c$ \hfill{\text{cf.~}$(a^{-1}b)(b^{-1}c)=a^{-1}c$}
\item[(6)] $\text{gyr}[a,b]=\text{gyr}[\ominus b,\ominus a]$ \hfill{Even property}
\item[(7)] $\text{gyr}[a,b]=\text{gyr}^{-1}[b,a], \text{the inverse of gyr}[b,a]$ \hfill{Inversive symmetry}
\end{enumerate}
\end{proposition}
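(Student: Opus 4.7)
The plan is to derive properties (1)--(7) from the axioms (G1)--(G4) in a carefully chosen order, since the absence of associativity means that every reassociation must be paid for by inserting an extra gyration. I would first establish three bootstrap facts: that the identity element and inverses are unique, that $\text{gyr}[e,a]$ is the identity automorphism for every $a$, and the left cancellation law (which is item (2) itself). The last of these, once secured, becomes the workhorse for everything that follows.

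For the triviality of $\text{gyr}[e,a]$ I would apply (G3) with $x=e$ to obtain $a\oplus z=a\oplus \text{gyr}[e,a](z)$, and separately invoke (G4) with $x=\ominus a,\ y=a$ to get $\text{gyr}[e,a]=\text{gyr}[\ominus a,a]$; combining these with (G2) and the uniqueness of inverses forces $\text{gyr}[e,a]$ to be the identity. For (2), applying (G3) with $x=\ominus a,\ y=a$ gives $\ominus a\oplus(a\oplus b)=e\oplus \text{gyr}[\ominus a,a](b)$, and triviality of $\text{gyr}[\ominus a,a]=\text{gyr}[e,a]$ collapses the right-hand side to $b$. Item (1) is then immediate: $\ominus(\ominus a)$ and $a$ are both left inverses of $\ominus a$, and left cancellation forces them to coincide.

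With left cancellation in hand, the remaining identities follow by algebraic manipulation inside (G3). For (3) I would start from the rewritten form $(a\oplus b)\oplus \text{gyr}[a,b](c)=a\oplus(b\oplus c)$ and cancel $a\oplus b$ from the left. For (4) I would substitute $c=\ominus b\ominus a$ into (3) and simplify the nested sums twice by left cancellation, using $a\oplus\ominus a=e$. Item (5) is a direct rearrangement of (G3) applied to the pair $(\ominus a\oplus b,\ \ominus b\oplus c)$, after which cancellation collapses it to $\ominus a\oplus c$. For (6) and (7), I would combine (G4) with the gyrator identity (3) to extract both the symmetry $\text{gyr}[a,b]=\text{gyr}[\ominus b,\ominus a]$ and the inverse relation $\text{gyr}[a,b]\circ\text{gyr}[b,a]=\mathrm{id}$, the latter by expanding both gyrations via (3) and telescoping.

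The main obstacle is purely bookkeeping: each time a triple sum is reassociated via (G3), a new gyration factor appears, and the scheme only terminates because gyrations with one argument equal to $e$, or reachable from such a configuration via (G4), collapse to the identity. The discipline is therefore to reduce every expression to left-associated canonical form with (G3), then recognize at the end which gyrations vanish by the bootstrap lemma or by (G4), leaving only the claimed identity.
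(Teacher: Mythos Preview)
The paper does not supply a proof of this proposition: it is stated with a citation to Ungar's monographs \cite{Ung1,Ung} and left at that. Your sketch is therefore not competing with anything in the paper; it is essentially the standard derivation one finds in those references, and the overall order of attack (bootstrap $\text{gyr}[e,a]=\mathrm{id}$ and left cancellation first, then read off (1), (3)--(5) by substitution and cancellation, and finally extract (6)--(7) from (G4) combined with the gyrator identity) is correct.

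One small bootstrapping issue is worth tightening. You propose to obtain $\text{gyr}[e,a]=\mathrm{id}$ from the relation $a\oplus z=a\oplus\text{gyr}[e,a](z)$ before proving left cancellation, but passing from $a\oplus z=a\oplus w$ to $z=w$ \emph{is} left cancellation, so as written the two steps lean on each other. The clean fix is to note directly from (G3) and (G2) that $L_{\ominus a}\circ L_a=\text{gyr}[\ominus a,a]$, which is a bijection by axiom (G3); hence $L_a$ is injective, and this gives left cancellation outright. From there $\text{gyr}[e,a]=\mathrm{id}$ follows immediately from $a\oplus z=a\oplus\text{gyr}[e,a](z)$, and the rest of your outline proceeds without change. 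Also, in your verification of (4) you use $b\oplus(\ominus b\oplus x)=x$, which is (2) applied with $a$ replaced by $\ominus b$ together with (1); make sure (1) is established before that step.
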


\begin{definition}\cite[Definition 1]{Atip}
A triple $( G, \tau,  \oplus)$ is called a {\it topological gyrogroup} if and only if
\begin{enumerate}
\item[(1)] $( G, \tau)$ is a topological space;
\item[(2)] $( G, \oplus)$ is a gyrogroup;
\item[(3)] The binary operation  $\oplus: G  \times G  \rightarrow G$ is continuous where $G \times G$ is endowed with the product topology
and the operation of taking the inverse $\ominus(\cdot ) : G  \rightarrow G $, i.e. $x \rightarrow \ominus x$, is continuous.
\end{enumerate}
\end{definition}

\begin{definition}\cite[Definition 2.9]{Ung1}
Let $(G,\oplus)$ be a gyrogroup with gyrogroup operation (or,
addition) $\oplus$. The gyrogroup cooperation (or, coaddition) $\boxplus$ is a second
binary operation in $G$ given by the equation
$$(\divideontimes)~~~~a\boxplus b=a\oplus \text{gyr}[a,\ominus b]b$$ for all $a, b\in G$.
The groupoid $(G, \boxplus)$ is called a cogyrogroup, and is said to
be the cogyrogroup associated with the gyrogroup $(G, \oplus)$.

Replacing $b$ by $\ominus b$ in $(\divideontimes)$, along with Identity $(\divideontimes)$ we have the identity
$$a\boxminus b=a\ominus \text{gyr}[a,b]b$$ for all $a, b\in G$, where we use the obvious notation, $a\boxminus b = a\boxplus(\ominus b)$.
\end{definition}

\begin{theorem}\cite[Theorem 2.14]{Ung1}
Let $(G,\oplus)$ be a gyrogroup with cooperation $\boxplus$ given by Definition 2.4.
 Then,$$a\oplus b=a\boxplus \text{gyr}[a,b]b.$$
\end{theorem}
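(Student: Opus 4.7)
The plan is to expand the right-hand side using the definition of the cooperation and reduce it to the left-hand side through a sequence of applications of the gyrator identity (Proposition~2.2(3)), left gyroassociativity $(G3)$, and the left loop property $(G4)$.

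Setting $g = \text{gyr}[a,b]\,b$ and applying Definition~2.4 with $b$ replaced by $g$ gives $a \boxplus g = a \oplus \text{gyr}[a, \ominus g]\,g$. The gyrator identity, applied to the outer gyroautomorphism, collapses the inner telescoping $\ominus g \oplus g = e$ and simplifies this to $a \oplus \bigl(\ominus(a\ominus g)\oplus a\bigr)$. Two applications of the left cancellation law (Proposition~2.2(2)) then reduce the desired identity $a\oplus b = a\boxplus g$ to the single equation
$$(a\ominus g)\oplus b = a. \qquad (\ast)$$

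To prove $(\ast)$, I first derive two preparatory facts. Substituting $c = \ominus b$ into $(G3)$ and using that $\text{gyr}[a,b]\in \mathrm{Aut}(G,\oplus)$ preserves inverses yields $a = (a\oplus b)\ominus g$, so $\ominus g = \ominus(a\oplus b)\oplus a$. Combining this with Proposition~2.2(4) (the inverse-of-a-sum formula) and one further left cancellation produces the equality $g = \text{gyr}[\ominus(a\oplus b),\,a]\,b$. Next, the left loop property $(G4)$, applied with the pair $(\ominus(a\oplus b),\,a)$, absorbs the trailing $\oplus a$ inside $\ominus g$ and gives $\text{gyr}[\ominus g,\,a] = \text{gyr}[\ominus(a\oplus b),\,a]$. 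Chaining these two observations produces the crucial identification
$$\text{gyr}[\ominus g,\,a]\,b \;=\; g.$$

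Finally, I rewrite the left side of $(\ast)$ via the variant $(a\oplus v)\oplus w = a\oplus\bigl(v\oplus \text{gyr}[v,a]\,w\bigr)$ of gyroassociativity (obtained from $(G3)$ together with Proposition~2.2(7)); specialising to $v=\ominus g$ and $w=b$ turns it into $a\oplus\bigl(\ominus g \oplus \text{gyr}[\ominus g, a]\,b\bigr) = a\oplus(\ominus g \oplus g) = a$, which is exactly $(\ast)$. The main obstacle is the identification $\text{gyr}[\ominus g,\,a]\,b = g$: the key insight is that $(G4)$ absorbs the extra $\oplus a$ sitting inside $\ominus g = \ominus(a\oplus b)\oplus a$, so that the resulting gyr-factor precisely matches the one produced by Proposition~2.2(4) when expressing $g$ itself as the image of $b$ under a gyroautomorphism. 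Once this matching is in place, the rest of the argument is a bookkeeping exercise.
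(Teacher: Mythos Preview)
The paper does not supply its own proof of this statement: it is quoted verbatim from \cite[Theorem~2.14]{Ung1} and used as a black box. There is therefore nothing in the present paper to compare your argument against.

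That said, your proof is correct. The reduction of the desired identity to $(\ast)$ via the gyrator identity and two left cancellations is valid, and the crucial identification $\text{gyr}[\ominus g,a]\,b = g$ is obtained correctly: from $a=(a\oplus b)\ominus g$ one extracts $\ominus g=\ominus(a\oplus b)\oplus a$, then Proposition~2.2(4) plus a left cancellation gives $g=\text{gyr}[\ominus(a\oplus b),a]\,b$, and finally $(G4)$ applied to the pair $(\ominus(a\oplus b),a)$ identifies $\text{gyr}[\ominus(a\oplus b),a]$ with $\text{gyr}[\ominus(a\oplus b)\oplus a,\,a]=\text{gyr}[\ominus g,a]$. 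The closing step, rewriting $(a\ominus g)\oplus b$ via the right-gyroassociative law $(x\oplus y)\oplus z = x\oplus(y\oplus\text{gyr}[y,x]z)$, then collapses everything to $a$ as claimed.
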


If a triple $( G, \tau,  \oplus)$ satisfies the first two conditions and its binary operation is continuous, we call such
triple a {\it paratopological gyrogroup} \cite{Atip1}. Sometimes we will just say that $G$ is a topological gyrogroup (paratopo-logical gyrogroup) if the binary operation and the topology are clear from the context.

\begin{proposition}\label{Pro1}\cite[proposition 1]{Atip1}
Let $G$ be a paratopological gyrogroup and $A$ be an open set. Then $B\oplus A$ is open for each $B\subseteq G$.
\end{proposition}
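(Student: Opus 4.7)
The plan is to write $B \oplus A = \bigcup_{b \in B}(b \oplus A)$, so it suffices to prove that for each fixed $b \in G$, the set $b \oplus A$ is open whenever $A$ is. Defining the left translation $L_c(x) := c \oplus x$, my goal is to show that $L_b$ is a homeomorphism; then $L_b(A) = b \oplus A$ will be open, and the union over $b \in B$ of open sets gives the result.

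The algebraic input I need is the identity $\mathrm{gyr}[b,\ominus b] = \mathrm{id}_G$. I would obtain it in two short steps. First, applying the gyrator identity of Proposition \ref{Pro:gyr}(3) with the left cancellation law Proposition \ref{Pro:gyr}(2) gives, for any $y,c\in G$,
\[
\mathrm{gyr}[e,y](c) = \ominus(e\oplus y)\oplus(e\oplus(y\oplus c)) = \ominus y\oplus(y\oplus c) = c,
\]
so $\mathrm{gyr}[e,y]=\mathrm{id}$. Then axiom $(G4)$ applied with $x=b$ and $y=\ominus b$ yields
\[
\mathrm{gyr}[b,\ominus b] = \mathrm{gyr}[b\oplus\ominus b,\,\ominus b] = \mathrm{gyr}[e,\ominus b] = \mathrm{id}.
\]

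With this identity in hand, the left cancellation law immediately gives $L_{\ominus b}\circ L_b = \mathrm{id}_G$, and the left gyroassociative law $(G3)$ combined with $\mathrm{gyr}[b,\ominus b] = \mathrm{id}$ gives
\[
L_b\bigl(L_{\ominus b}(y)\bigr) = b\oplus(\ominus b\oplus y) = (b\oplus\ominus b)\oplus\mathrm{gyr}[b,\ominus b](y) = e\oplus y = y.
\]
Both $L_b$ and $L_{\ominus b}$ are continuous as sections of the jointly continuous binary operation $\oplus$ (note that continuity of the inversion $\ominus$ is never used, which is essential in the paratopological setting). Hence $L_b$ is a homeomorphism with inverse $L_{\ominus b}$, $b\oplus A = L_b(A)$ is open, and so is $B\oplus A$.

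The only nontrivial step is the verification of $\mathrm{gyr}[b,\ominus b] = \mathrm{id}$; once that is in place, everything else is a direct bookkeeping argument using the gyrogroup axioms and joint continuity of $\oplus$.
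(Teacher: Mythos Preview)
Your argument is correct. The paper itself does not supply a proof of this proposition; it simply cites \cite[Proposition~1]{Atip1} and uses the result as a black box. Your proof is the standard one: decompose $B\oplus A$ as a union of left translates and show each $L_b$ is a homeomorphism by exhibiting $L_{\ominus b}$ as its continuous two-sided inverse. The identity $\mathrm{gyr}[b,\ominus b]=\mathrm{id}$ is exactly what is needed to verify $L_b\circ L_{\ominus b}=\mathrm{id}_G$, and your derivation of it from $(G4)$ together with $\mathrm{gyr}[e,\cdot]=\mathrm{id}$ is clean. You are also right to emphasize that continuity of inversion is never invoked, which is the whole point in the paratopological setting; this is precisely why the result from \cite{Atip} (stated in the paper as Proposition~\ref{Def:gyr} for topological gyrogroups) carries over.
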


To study the separation axioms in paratopological gyrogroups, Atiponrat and Maungchang \cite[Definition 7]{Atip1} introduced the the following concept:

\begin{definition}\cite[Definition 7]{Atip1}
 A paratopological gyrogroup $G$ micro-associative if for any neighborhood $U \subseteq G$ of the identity $e $,
there are neighborhoods $W \subseteq V \subseteq U$ of $e$ such that $a \oplus( b \oplus V ) = ( a \oplus b ) \oplus V$ for any $a, b  \in W$.
\end{definition}

They showed that for each neighborhood $U$ every micro-associative paratopological gyrogroup $G$ of the identity $e$, there are neighborhoods $W \subseteq V \subseteq U$ of $e$ such that $\text{gyr~}[V]\subseteq  V$ \cite[Lemma 2]{Atip1}. This motives us introduce the following concept which plays an important role in our study.

\begin{definition}\label{De1}
Let $G$ be a paratopological gyrogroup and $\mathscr{B}$ a local base at the identity $e$ of $G$. We say that $\mathscr{B}$ is {\it locally gyroscopic invariant} if there is an open neighborhood $U$ of $e$ such that gyr$[a,b]V\subseteq V$ for each $V\in \mathscr{B}$ and $a,b\in U$. A paratopological gyrogroup $G$ is called {\it locally gyroscopic invariant} if $G$ has a locally gyroscopic invariant base at the identity.
\end{definition}

\begin{remark}
 The M\"{o}bius gyrogroup $\mathbb{D}=\{z\in \mathbb{C}:|z|<1\}$ with the standard topology is a topological gyrogroup(see \cite[Example 2]{Atip1}). In fact, $\mathbb{D}$ is micro-associative (see \cite[Example 8]{Atip1}). Also, one can easily show that $\mathbb{D}$ is locally gyroscopic invariant. However, we do not what are different for micro-associative paratopological gyrogroups and locally gyroscopic invariant paratopological gyrogroups.

\end{remark}
\section{Complete regularity in paratopological gyrogroups}

In this section, we shall study the separation axioms in paratopological gyrogroups. By the Urysohn's technology we prove the following important result.

\begin{proposition}\label{Pro}
Let $G$ be a locally gyroscopic invariant paratopological gyrogroup. Then for any neighborhood $O\subseteq G$ of $e$ there exists a continuous function $f : G\rightarrow[0, 1]$ such that $f (e) = 0$ and $f^{-1}([0, 1)) \subseteq \overline{O}$.
\end{proposition}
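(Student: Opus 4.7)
The plan is to carry out a Urysohn-type construction, with the locally gyroscopic invariance supplying the associativity-in-the-small that replaces honest associativity in the paratopological group case. First I would fix an open neighborhood $U$ of $e$ and a local base $\mathscr{B}$ at $e$ satisfying $\text{gyr}[a,b]V\subseteq V$ for all $V\in\mathscr{B}$ and $a,b\in U$. Applying this to both $(a,b)$ and $(b,a)$ and invoking the inversive symmetry $\text{gyr}[a,b]=\text{gyr}^{-1}[b,a]$ of Proposition~\ref{Pro:gyr}(7) upgrades the inclusion to the equality $\text{gyr}[a,b]V=V$. Using joint continuity of $\oplus$ at $(e,e)$, I would then build a decreasing sequence $U_0\supseteq U_1\supseteq U_2\supseteq\cdots$ in $\mathscr{B}$ with $U_0\subseteq O\cap U$ and $U_{n+1}\oplus U_{n+1}\subseteq U_n$ for every $n\geq 0$.

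For each dyadic rational $r\in(0,1)$ with finite binary expansion $r=\sum_{i=1}^{k}2^{-n_i}$, $1\leq n_1<\cdots<n_k$, I would define
\[
V_r=U_{n_1}\oplus\bigl(U_{n_2}\oplus(\cdots\oplus U_{n_k})\cdots\bigr).
\]
Each $V_r$ is open by Proposition~\ref{Pro1} and contains $e$, and a straightforward induction based on $U_{n+1}\oplus U_{n+1}\subseteq U_n$ gives $V_r\subseteq U_0$. The main technical obstacle is the failure of associativity of $\oplus$, and this is precisely where the locally gyroscopic invariance does its work: the gyroassociative law $a\oplus(b\oplus z)=(a\oplus b)\oplus\text{gyr}[a,b](z)$ combined with $\text{gyr}[a,b]W=W$ yields the set-theoretic identity $a\oplus(b\oplus W)=(a\oplus b)\oplus W$ for all $W\in\mathscr{B}$ and all $a,b\in U_0$. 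Since every partial sum arising in the expansion of $V_r$ stays inside $U_0\subseteq U$, this allows one to regroup factors of the form $U_{n_i}$ freely, so iterated $\oplus$-products of the $U_n$'s behave exactly as in the associative paratopological group case.

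With associativity-in-the-small in hand I would next prove the sharper monotonicity $\overline{V_r}\subseteq V_s$ for dyadic $0<r<s<1$, by induction on the binary length of $r$ and using the doubling estimate $U_{n+1}\oplus U_{n+1}\subseteq U_n$ to absorb the carry when passing from $r$ to $r+2^{-n}$. Finally, setting
\[
f(x)=\inf\bigl(\{1\}\cup\{r\in(0,1)\cap\mathbb{Z}[1/2]:x\in V_r\}\bigr),
\]
one has $f(e)=0$ (as $e\in V_r$ for every $r$) and $f^{-1}([0,1))\subseteq\bigcup_{r}V_r\subseteq U_0\subseteq\overline{O}$, while continuity of $f$ follows from the standard Urysohn argument using openness of the $V_r$'s together with $\overline{V_r}\subseteq V_s$. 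The only residual subtlety is bookkeeping: one must ensure that every gyration encountered when expanding the nested products of the $U_n$'s reduces to some $\text{gyr}[a,b]$ with $a,b\in U_0\subseteq U$, after which the classical Urysohn construction transfers essentially verbatim.
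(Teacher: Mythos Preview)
Your overall plan---a Urysohn construction in which local gyroscopic invariance substitutes for associativity---is exactly the paper's approach, and your bookkeeping for the gyrations is correct: the inclusion $\text{gyr}[a,b]W\subseteq W$ (you do not actually need the upgrade to equality) together with the fact that every right-tail $U_{n_j}\oplus(U_{n_{j+1}}\oplus\cdots)$ lies in $U_{n_j-1}\subseteq U_0$ is enough to push an extra factor $U_n$ through the nesting and obtain $V_r\oplus U_n\subseteq V_{r+2^{-n}}$.

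The genuine gap is your ``sharper monotonicity'' $\overline{V_r}\subseteq V_s$. This is unprovable here: the proposition does \emph{not} assume regularity (it is later applied to a merely Hausdorff $G$), and if $G$ fails to be regular at $e$ there is an open $O\ni e$ with $\overline{V}\not\subseteq O$ for every neighborhood $V$ of $e$. Running your construction inside this $O$ gives $V_s\subseteq U_0\subseteq O$, so $\overline{V_r}\subseteq V_s$ would force $\overline{V_r}\subseteq O$, a contradiction. The doubling estimate only yields $V_r\oplus U_n\subseteq V_{r+2^{-n}}$, and without a continuous inverse you cannot conclude $\overline{V_r}\subseteq V_r\oplus U_n$. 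Consequently your $f$, defined via the open sets $V_r$, need not be lower semicontinuous. The paper fixes exactly this point by working with closures: from $\overline{V(r)}\subseteq\overline{V(r)}\oplus U_n\subseteq\overline{V(r)\oplus U_n}\subseteq\overline{V(s)}$ and the openness of $\overline{V(r)}\oplus U_n$ (Proposition~\ref{Pro1}) one gets $\overline{V(r)}\subseteq\text{Int}\,\overline{V(s)}$, and the Urysohn function is defined by $f(y)=\inf\{r:y\in\overline{V(r)}\}$. With that single modification your argument goes through.
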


\begin{proof}
Since $G$ is a locally gyroscopic invariant paratopological gyrogroup, according to Definition \ref{De1} $G$ has a locally gyroscopic invariant base $\mathscr{B}$ at the identity $e$ of $G$ and an open neighborhood $U$ of $e$ such that gyr$[a,b]V\subseteq V$ for each $V\in \mathscr{B}$ and $a,b\in U$. Without loss the generality, we can assume that $U\subseteq O$ and every element of $\mathscr{B}$ is contained in $U$. Then we can find a family $\{U_i:i\in\omega\}\subseteq \mathscr{B}$ of open neighborhoods of $e$ such that $U_{i+1}\oplus U_{i+1}\subseteq U_i$ and $U_0\oplus U_0\subseteq U\subseteq O$ for each $i\in \omega$ and gry$[a,b]U_i\subseteq U_i$ holds for each $a,b\in U$ and each $i\in \omega$.

Put $V(1) = U_0$, fix $n\in \omega$, and assume that open neighborhoods $V(m/2^{n})$
of $e$ are defined for each $m = 1, 2,..., 2^n$. Put then $V(1/2^{n+1}) = U_{n+1}$, $V(2m/2^{n+1}) =
V(m/2^n)$ for $m = 1,..., 2^n$, and $$(*)~~~~V((2m + 1)/2^{n+1}) = V(m/2^n) \oplus U_{n+1} = V(m/2^n)\oplus V(1/2^{n+1}),$$
for each $m =  1, 2,..., 2^n-1$. By Proposition \ref{Pro1}, one can easily show that $V(r)$ is an open neighborhood of $e$ for every positive
dyadic rational number $r \leq1$. We claim the following conditions are satisfied:
\smallskip
\begin{enumerate}
\item $V(m/2^n) \oplus V(1/2^n) \subseteq V((m + 1)/2^n)\subseteq U\subseteq O$, for all integers $m$ and $n$ satisfying $ n > 0$ and $2^n> m > 0$.
\smallskip
\item $V(r_1)\subseteq V(r_2)$ for any positive
dyadic rational numbers $r_1$ and $r_2$ with $r_1< r_2\leq 1$
\smallskip
\item $\overline{V(r_1)}\subseteq \text{Int~}\overline{V(r_2)}$ for any positive
dyadic rational numbers $r_1$ and $r_2$ with $r_1< r_2\leq 1$.
\end{enumerate}
\smallskip

{\bf The proof of (1)}. Let us prove (1) by induction on $n$.

If $n= 1$, then the only possible value for $m$ is also 1, and we have: $$V(1/2)\oplus V(1/2) = U_1\oplus U_1\subseteq U_0 = V(1)\subseteq U\subseteq O.$$

Assume that $(1)$ holds for some $n$. Let us verify it for $n+1$.

If $m=2k$ for some $k\in \omega$, then we have that
\begin{align*}
&V(m/2^{n+1}) \oplus V(1/2^{n+1})
\\&=V(2k/2^{n+1}) \oplus V(1/2^{n+1})
\\&=V((2k+1)/2^{n+1}) \quad\quad\text{by the definition~} (*)
\\&=V((m+1)/2^{n+1}).
\end{align*}
This implies that $V(m/2^{n+1}) \oplus V(1/2^{n+1})\subseteq V((m+1)/2^{n+1}).$
Also,
\begin{align*}
&V((m+1)/2^{n+1})
\\&=V(2k/2^{n+1}) \oplus V(1/2^{n+1})  \quad\quad\text{by the definition~} (*)
\\&\subseteq V(k/2^{n}) \oplus U_n  \quad\quad\quad\text{by~}V(1/2^{n+1})=U_{n+1}\subseteq U_n
\\&=V(k/2^{n}) \oplus V(1/2^{n})  \quad\quad\quad\text{by~}V(1/2^{n})=U_{n}
\\&=V((k+1)/2^{n})\subseteq U\subseteq O. \quad\quad\quad\text{by the inductive assumption}
\end{align*}
Assume now that $0 < m = 2k + 1 < 2^{n+1}$, for some integer $k$. Then
\begin{align*}
&V(m/2^{n+1})\oplus V(1/2^{n+1})
\\& = V((2k + 1)/2^{n+1})\oplus U_{n+1} \quad\quad\quad\text{by~}V(1/2^{n+1})=U_{n+1}
\\&= (V(k/2^n)\oplus U_{n+1})\oplus U_{n+1} \quad\quad\text{by the definition~} (*)
\\&=\bigcup_{a\in V(k/2^n),b\in U_{n+1}}a\oplus (b\oplus \text{gyr}[b,a]U_{n+1}) \quad\text{by right gyroassociative law}
\\&\text{by locally gyroscopic invariant and the inductive assumption~}V(k/2^n)\subseteq U\subseteq O,
\\&\text{clearly,~} U_{n+1}\subseteq U\subseteq O, \text{~then}
\\&\subseteq\bigcup_{a\in V(k/2^n),b\in U_{n+1}}a\oplus (b\oplus U_{n+1})
\\&=V(k/2^n)\oplus (U_{n+1}\oplus U_{n+1})
\\& \subseteq V(k/2^n)\oplus U_{n}
\\&= V(k/2^n)\oplus V(1/2^n)
\\&\subseteq V((k+1)/2^n)   \quad\quad\quad\text{by the inductive assumption}
\\&=V(2(k+1)/2^{n+1})
\\&=V((m+1)/2^{n+1})
\\&=V((k+1)/2^n)\subseteq U\subseteq O \quad \quad\quad\text{by the inductive assumption}
\end{align*}
 This completes the proof of (1).

 {\bf The proof of (2)}. If $n=1$, then the only possible value for $m$ is 1, and we have:
 $$V(1/2)=U_1\subseteq U_0=V(2/2).$$
 Hence, to proof (2) it is enough to prove that
 $$V(m/2^{n+1})\subseteq V((m+1)/2^{n+1})\quad\quad\quad(**) $$
 holds for each $n\geq1$ and $0<m, m+1\leq 2^{n}$.

 If $m=2k$ for some $k$, then
 \begin{align*}
 &V(m/2^{n+1})=V(2k/2^{n+1})
 \\&=V(k/2^{n})\subseteq V(k/2^{n})\oplus U_{n+1}
 \\&=V(k/2^{n})\oplus V(1/2^{n+1}) \quad\quad\quad \text{by~~}V(1/2^{n+1})=U_{n+1}
 \\&=V((2k+1)/2^{n+1}) \quad\quad\text{by the definition~} (*)
 \\&=V((m+1)/2^{n+1}).
 \end{align*}
 If $m=2k+1$ for some $k$, then
  \begin{align*}
 &V(m/2^{n+1})=V((2k+1)/2^{n+1})
 \\&=V(k/2^{n})\oplus U_{n+1} \quad\quad\text{by the definition~} (*)
 \\&\subseteq V(k/2^{n})\oplus U_{n}
 \\&=V(k/2^{n})\oplus V(1/2^{n}) \quad\quad\quad \text{by~~}V(1/2^{n})=U_{n}
 \\&\subseteq V((k+1)/2^{n}) \quad\quad\quad \text{by the fact }(1)
 \\&=V(2(k+1)/2^{n+1})
 \\&=V((m+1)/2^{n+1}).
 \end{align*}

 This completes the proof of (2).

 {\bf The proof of (3)}. Since $r_1< r_2$, we can find positive integers $n,m$ such that $r_1\leq m/2^n < (m + 1)/2^n\leq r_2$.
From the fact that $V(r_1)\subseteq V(r_2)$ if $0<r_1<r_2\leq 1$,
it follows that:
\begin{align*}
&\overline{V(r_1)}\subseteq \overline{V(m/2^n)} \quad \quad\quad\quad\quad\quad\text{by the fact~} (2)
\\&\subseteq \overline{V(m/2^n)}\oplus V(1/2^n)
\\&\subseteq \overline{V(m/2^n)\oplus V(1/2^n)} \quad \text{by~} \oplus \text{~being continuous on~} G\times G
\\&\subseteq\overline{V((m+1)/2^n)} \quad \quad\quad\quad\text{by the fact~} (1)
\\&\subseteq \overline{V(r_2)} \quad \quad\quad\quad\quad\quad\text{by the fact~} (2).
\end{align*}
Since the set $ V(1/2^n)=U_n$ is open, by \cite[Proposition 1]{Atip1}, the set $\overline{V(m/2^n)}\oplus V(1/2^n)$ is open and contains $\overline{V(r_1)}$, and therefore, we obtain that $\overline{V(r_1)}\subseteq \text{Int~}\overline{V(r_2)}$.

Now we define a real-valued function $f$ on $G$ as follows:

$$
f(y)=\left\{\begin {array}{r@{\quad
\quad}l}1& \text{if~} y\in  G\setminus \overline{V(1)},\\
\text{inf~}\{r:y\in \overline{V(r)}\}& \text{if~} y\in \overline{V(1)}.
\end {array}\right. $$

Clearly, the function $f$ is well-defined and $f(e)=0,$ $f^{-1}([0,1))\subseteq \overline{V(1)}=\overline{U_0} \subseteq \overline{O}$. Thus it is enough to prove the following:

(4) $f$ is a continuous function on $G$.

{\bf Proof of (4).}  Take any $y\in G$ and an open neighborhood $W$ of $f(y)$ in $[0,1]$.

(i) If $f(y)=1$, then one can find a positive
dyadic rational number $r< 1$ such that $[r,1]\subseteq W$. According to the definition of $f$ it is obvious that $y\in G\setminus \overline{V(r)}$ and $f(G\setminus \overline{V(r)})\subseteq [r,1]\subseteq W$, which implies that $f$ is continuous at $y$.

(ii) If $0<f(y)<1$, then one can find positive
dyadic rational numbers $r_1< r_2< 1$ such that $f(y)\in (r_1, r_2)\subseteq [r_1, r_2]\subseteq W$. By (3) and the definition of $f$, one can easily show that $y\in (\text{Int~}\overline{V(r_2)})\setminus \overline{V(r_1)}$. Indeed, $y\notin \overline{V(r_1)}$ is obvious. By $f(y)< r_2$ one can find a positive
dyadic rational numbers $r_3$ such that $f(y)<r_3< r_2$. Hence, $y\in \overline{V(r_3)})\subseteq \text{Int~}\overline{V(r_2)}$ by (3) and the definition of $f$. Therefore, the set $(\text{Int~}\overline{V(r_2)})\setminus \overline{V(r_1)}$ is an open neighbourhood of $y$ such that $f((\text{Int~}\overline{V(r_2)})\setminus \overline{V(r_1)})\subseteq  [r_1, r_2]\subseteq W$, which implies that $f$ is continuous at $y$.

(iii) If $f(y)=0$, then one can find a positive
dyadic rational number $r< 1$ such that $[0,r]\subseteq W$. By (3) and the definition of $f$, one can easily show that $y\in \text{Int~}\overline{V(r)}$ and $f(\text{Int~}\overline{V(r)})\subseteq [0,r]\subseteq W$, which implies that $f$ is continuous at $y$. Hence we have proved that  $f$ is continuous at each point of $G$.
\end{proof}

Now we obtain our main results:
\begin{theorem}
Every regular locally gyroscopic invariant paratopological gyrogroup $G$ is completely regular.
\end{theorem}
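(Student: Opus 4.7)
The plan is to reduce the theorem to Proposition~\ref{Pro} by a standard translation argument. Fix $x \in G$ and an open neighborhood $U$ of $x$ in $G$; the goal is to produce a continuous $g \colon G \to [0,1]$ with $g(x) = 0$ and $g^{-1}([0,1)) \subseteq U$.

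First I translate the problem to the identity. The set $\ominus x \oplus U$ is open by Proposition~\ref{Pro1}, and it contains $\ominus x \oplus x = e$ by axiom (G2). Using regularity of $G$, I choose an open neighborhood $V$ of $e$ with $\overline{V} \subseteq \ominus x \oplus U$. Applying Proposition~\ref{Pro} to $V$ produces a continuous $f \colon G \to [0,1]$ with $f(e) = 0$ and $f^{-1}([0,1)) \subseteq \overline{V} \subseteq \ominus x \oplus U$.

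Now define $g \colon G \to [0,1]$ by $g(y) = f(\ominus x \oplus y)$. The map $y \mapsto \ominus x \oplus y$ is continuous as the restriction of the jointly continuous operation $\oplus$ to the slice $\{\ominus x\} \times G$, so $g$ is continuous. Clearly $g(x) = f(\ominus x \oplus x) = f(e) = 0$. If $g(y) < 1$, then $\ominus x \oplus y \in f^{-1}([0,1)) \subseteq \ominus x \oplus U$, so $\ominus x \oplus y = \ominus x \oplus u$ for some $u \in U$; the left cancellation law (Proposition~\ref{Pro:gyr}(2), applied after prepending $x \oplus$ and using involution of inversion) then gives $y = u \in U$. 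Therefore $g^{-1}([0,1)) \subseteq U$, which is precisely the definition of complete regularity at $x$.

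The only subtlety worth flagging is that in a paratopological gyrogroup the inversion $\ominus$ need not be continuous, so one cannot simply quote ``left translations are homeomorphisms'' as one would for topological gyrogroups. However, the argument above only needs continuity of $y \mapsto \ominus x \oplus y$ for a single fixed $x$, which follows from joint continuity of $\oplus$, together with left cancellation, which is purely algebraic. With Proposition~\ref{Pro} in hand there is no further obstacle.
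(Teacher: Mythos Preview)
Your proof is correct and follows essentially the same approach as the paper: reduce to the identity, apply regularity to shrink the neighborhood, and then invoke Proposition~\ref{Pro}. The paper's version is terser---it simply asserts that $G$ is homogeneous and works at $e$---whereas you spell out the left translation $y\mapsto \ominus x\oplus y$ explicitly and verify the needed cancellation; but this is the same argument, and in fact your version is slightly more careful since the paper never justifies homogeneity for paratopological gyrogroups (your observation that $L_{\ominus x}$ is continuous from joint continuity of $\oplus$, with algebraic inverse $L_x$, is exactly what makes homogeneity work here without needing continuity of inversion).
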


\begin{proof}
Since $G$ is homogeneous, it is enough to show that there is a continuous real-valued function which can separate the identity $e$ in $G$ from the closed set $F$ such that $e\notin F$. Take any open neighborhood $O$ of $e$. Since $G$ is regular, one can find an open neighborhood $U$ of $e$ such that $\overline{U}\subseteq O$. According to Proposition \ref{Pro}, we can find a continuous function $f : G \rightarrow[0, 1]$ such that $f (e) = 0$ and $f^{-1}([0, 1)) \subseteq \overline{U}\subseteq O$. This implies that $f(y)=1$ for each $y\in G\setminus O$. Thus $G$ is completely regular.
\end{proof}

\begin{theorem}
Every Hausdorff locally gyroscopic invariant paratopological gyrogroup $G$ is functionally Hausdorff.
\end{theorem}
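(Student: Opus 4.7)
The plan is to mimic the deduction of complete regularity from Proposition~\ref{Pro} in the previous theorem, replacing ``regularity plus homogeneity'' by ``Hausdorffness plus homogeneity''. So first I would check that $G$ is a topologically homogeneous space: for each $a\in G$, the left translation $L_a\colon t\mapsto a\oplus t$ is continuous by joint continuity of $\oplus$, and by the left cancellation law its two-sided inverse is $L_{\ominus a}$, which is continuous for exactly the same reason. Hence $L_a$ is a homeomorphism of $G$ for every $a\in G$, even though $\ominus$ itself is not assumed continuous in the paratopological setting.

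Next, fix two distinct points $x,y\in G$ and set $z=\ominus x\oplus y$. By the left cancellation law, $z=e$ would force $y=x$, so $z\neq e$. Using the Hausdorff property, choose disjoint open neighborhoods $O$ of $e$ and $W$ of $z$. Since $W$ is an open set containing $z$ and disjoint from $O$, no point of $W$ can lie in $\overline{O}$ (because $W$ itself witnesses that such a point is not a limit point of $O$); in particular $z\notin \overline{O}$.

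Apply Proposition~\ref{Pro} to this neighborhood $O$ to obtain a continuous function $h\colon G\to[0,1]$ with $h(e)=0$ and $h^{-1}([0,1))\subseteq \overline{O}$. Since $z\notin\overline{O}$, we get $h(z)=1$. Define $f=h\circ L_{\ominus x}\colon G\to[0,1]$; this is continuous as a composition of continuous maps, and
\[
f(x)=h(\ominus x\oplus x)=h(e)=0,\qquad f(y)=h(\ominus x\oplus y)=h(z)=1,
\]
so $f(x)\neq f(y)$, which is exactly what is needed for functional Hausdorffness.

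The only genuinely paratopological issue is the very first step: verifying that left translations are homeomorphisms without having continuity of $\ominus$ at our disposal. This is immediate from the cancellation law, so I do not expect any serious obstacle; the rest of the argument is a straight translation argument combined with the already proved Proposition~\ref{Pro}, exactly parallel to how the preceding theorem derives complete regularity from regularity.
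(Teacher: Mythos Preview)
Your proof is correct and follows essentially the same route as the paper's: both use Hausdorffness to find an open neighborhood $O$ of $e$ whose closure misses the other point, apply Proposition~\ref{Pro} to get the separating function, and invoke homogeneity via left translations. The paper simply says ``since $G$ is homogeneous we may assume $x=e$'' where you explicitly compose with $L_{\ominus x}$, but the argument is the same.
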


\begin{proof}
Take any distinct points $x, y \in G$. Since $G$ is homogeneous, we can assume that $x$ is the identity $e$ in $G$. Since $G$ is Hausdorff, one can find an open neighborhood $O$ of $x$ such that $y\notin \overline{O}$. According to proposition \ref{Pro}, we can find a continuous function $f : G\rightarrow[0, 1]$ such that $f (x) = 0$ and $f^{-1}([0, 1)) \subseteq \overline{O}$. This implies that $f(y)=1\neq 0=f(x)$. Thus $G$ is functionally Hausdorff.
\end{proof}

By Proposition \ref{Pro1}, we can prove the following result:

\begin{lemma}\label{L}
Let $G$ be a paratopological gyrogroup and  $A\subseteq G$ be an open neighborhood of the identity $e$. Then $A\subseteq \text{Int~}\overline{B\oplus A}$ for any set $B\subseteq G$ containing $e$.
\end{lemma}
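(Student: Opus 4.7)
My plan is to reduce the statement to two elementary observations: that $A \subseteq B \oplus A$ whenever $e \in B$, and that every open set is contained in the interior of its own closure. No Urysohn-style machinery is needed here; Proposition \ref{Pro1} does essentially all the work.

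First I would note that, because $e$ is the identity element and $e \in B$, one has $A = \{e\} \oplus A \subseteq B \oplus A$ as a purely set-theoretic inclusion, using only the identity axiom from Definition \ref{Def:gyr}. Next, since $A$ is open, Proposition \ref{Pro1} tells us that $B \oplus A$ is open in $G$. But any open set $U$ in a topological space satisfies $U \subseteq \text{Int~}\overline{U}$, because $U$ is an open subset of $\overline{U}$ and so sits inside the interior of $\overline{U}$. Applying this with $U = B \oplus A$ and combining with the preceding inclusion yields
\[
A \;\subseteq\; B \oplus A \;\subseteq\; \text{Int~}\overline{B \oplus A},
\]
which is exactly the claim.

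I do not foresee any real obstacle. The hypothesis $e \in B$ is used exactly once, to secure the inclusion $A \subseteq B \oplus A$ (without it, e.g.\ for $B = \emptyset$, the lemma fails); the openness of $A$ enters solely to invoke Proposition \ref{Pro1}. In particular, no use is made of the gyroassociative law, of inverses, or of any local structure at $e$ beyond $e$ itself being in $B$.
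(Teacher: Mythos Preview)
Your proposal is correct and follows essentially the same argument as the paper: the paper observes $A\subseteq B\oplus A$, invokes Proposition~\ref{Pro1} to get that $B\oplus A$ is open, and concludes $A\subseteq B\oplus A\subseteq \text{Int~}\overline{B\oplus A}$. Your write-up simply makes the two elementary steps (why $e\in B$ gives $A\subseteq B\oplus A$, and why an open set lies in the interior of its closure) more explicit.
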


\begin{proof}
Indeed, one can easily show that $A\subseteq B\oplus A\subseteq\overline{B\oplus A}$. Since $A$ is open in $G$, $B\oplus A$ is open by Proposition \ref{Pro1}. Hence, we have that $A\subseteq B\oplus A\subseteq \text{Int~}\overline{B\oplus A}$.
\end{proof}

Indeed,  Atiponrat and Maungchang \cite{Atip1} have shown that the condition ``for any neighborhoods $A, B$ of the identity $e$ of $G$" can be replaced by ``for any open neighborhoods $A, B$ of the identity $e$ of $G$" in Theorem \ref{Th1}. Thus, by Lemma \ref{L} we have the following result:

\begin{theorem}
Every micro-associative regular (Hausdorff) paratopological gyrogroup is completely regular (function Hausdorff).
\end{theorem}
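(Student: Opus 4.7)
The plan is to deduce both claims directly from Theorem~\ref{Th1} (and its Hausdorff analogue) by verifying that the auxiliary inclusion appearing in its hypothesis is automatic for micro-associative paratopological gyrogroups. I would first invoke the refinement of Atiponrat and Maungchang noted in the paragraph just before the theorem, which allows one to check the extra inclusion $\overline{A}\subseteq \text{Int~}\overline{B\oplus A}$ only on \emph{open} neighborhoods $A,B$ of the identity. Lemma~\ref{L} then handles precisely this reduced condition: for an open neighborhood $A$ of $e$ and any $B\subseteq G$ with $e\in B$, one has $A\subseteq \text{Int~}\overline{B\oplus A}$, because $B\oplus A$ is itself open by Proposition~\ref{Pro1}. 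Stringing these two observations together, a micro-associative regular paratopological gyrogroup automatically satisfies the hypothesis of Theorem~\ref{Th1} and hence is completely regular.

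For the Hausdorff/functionally Hausdorff variant, I would observe that the Urysohn-style construction underlying Proposition~\ref{Pro} and the proof of Theorem~\ref{Th1} is not really about separating a point from a closed set; it produces, from a prescribed open neighborhood $O$ of $e$, a continuous function $f:G\to[0,1]$ with $f(e)=0$ and $f^{-1}([0,1))\subseteq\overline{O}$. Given distinct $x,y\in G$, I would translate so that $x=e$, invoke Hausdorffness to pick an open $O$ with $y\notin\overline{O}$, and apply the same construction to produce $f$ with $f(x)=0\ne 1=f(y)$, which proves functional Hausdorffness.

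The main obstacle, as I see it, is reconciling the form of the inclusion supplied by Lemma~\ref{L} (with $A$ on the left-hand side) with the one demanded by the original statement of Theorem~\ref{Th1} (which a priori asks for $\overline{A}$ on the left). Once the Atiponrat--Maungchang refinement restricts attention to open $A$, the two inclusions line up, and the entire argument collapses to a one-line appeal to results already in hand; the rest of the work has been offloaded onto the earlier proposition and theorems.
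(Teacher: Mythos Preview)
Your plan is exactly the paper's: invoke the Atiponrat--Maungchang refinement of Theorem~\ref{Th1} to open neighborhoods, apply Lemma~\ref{L} to discharge the remaining hypothesis automatically, and for the Hausdorff case feed an $O$ with $y\notin\overline{O}$ into the same Urysohn machine (the paper leaves this last step implicit, so you are if anything slightly more explicit).

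One comment on the obstacle you flag. Your resolution---``once the refinement restricts to open $A$, the two inclusions line up''---is not itself an argument: for open $A$ one still has $A\subsetneq\overline{A}$ in general, so $A\subseteq\text{Int}\,\overline{B\oplus A}$ from Lemma~\ref{L} does not literally yield $\overline{A}\subseteq\text{Int}\,\overline{B\oplus A}$. The paper makes the same leap without comment. In practice the discrepancy is harmless, because the Urysohn construction underlying \cite{Atip1} (and visible in the proof of Proposition~\ref{Pro} here) never actually uses the hypothesis in the form $\overline{A}\subseteq\text{Int}\,\overline{B\oplus A}$; what is needed is that $\overline{V(m/2^n)}\oplus V(1/2^n)$ is open and lies inside $\overline{V((m+1)/2^n)}$, and this comes straight from Proposition~\ref{Pro1} together with continuity of $\oplus$. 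So the issue you correctly spotted evaporates once one looks inside the construction rather than at the packaged statement of Theorem~\ref{Th1}, but neither you nor the paper makes that explicit.
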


\section{properties of (para)topological gyrogroups}
In this section, we will show that every topological gyrogroup has many self-homeomorphisms.
Let $(G,\oplus)$ be a gyrogroup, $x\in G$ and $A,B\subseteq G$.
We write $A\oplus B=\{a\oplus b:a\in A, b\in B\}$, $x\oplus A = \{x\oplus a : a \in A\}$
and $A\oplus x = \{a\oplus x : a \in A\}$.

\begin{proposition}\cite[Proposition 3 and Corollary 5]{Atip}\label{Def:gyr}
Let $G$ be a topological gyrogroup, $x\in G$ and $A,B\subseteq G$.
\begin{enumerate}
\item[(1)] the inverse mapping $inv:G\rightarrow G$, where $inv(x)=\ominus x$ for every $x\in G$, is a homeomorphism;
\item[(2)] the left translation $L_{x}: G \rightarrow G$, where $L_{x}(y) = x \oplus y$ for every $y\in G$, is a
homeomorphism;
\item[(3)] the right translation $R_{x}: G \rightarrow G$, where $R_{x}(y) = y\oplus x$ for every $y\in G$, is a
homeomorphism;
\item[(4)] if $A$ is open in $G$, then $x\oplus A$, $A\oplus x$, $A\oplus B$, $B\oplus A$ and $\ominus A$ are all open in $G$.
\end{enumerate}
\end{proposition}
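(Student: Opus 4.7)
My plan is to handle the four claims in order, using the algebraic identities in Proposition~\ref{Pro:gyr} together with the joint continuity of $\oplus$ and continuity of $\ominus$ built into the definition of a topological gyrogroup. For (1), $inv$ is continuous by hypothesis, and the involution identity $\ominus(\ominus a)=a$ says precisely that $inv\circ inv = \mathrm{id}_G$; hence $inv$ is a continuous self-inverse bijection, and therefore a homeomorphism. For (2), $L_x$ is continuous as the composition of the continuous map $y\mapsto (x,y)$ with $\oplus$. The left cancellation law $\ominus x\oplus(x\oplus y)=y$, together with (1), gives $L_{\ominus x}\circ L_x = \mathrm{id}_G = L_x\circ L_{\ominus x}$, so $L_x$ is a bijection with continuous inverse $L_{\ominus x}$.

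Part (3) is the main obstacle. Continuity of $R_x$ is immediate, exactly as for $L_x$, but in contrast to the group case we cannot simply invert via $R_{\ominus x}$, because a gyrogroup need not satisfy a direct right cancellation law. My plan is to exhibit the inverse of $R_x$ in closed form using the cogyrogroup machinery of Definition~2.4: the unique solution of the equation $y\oplus x = z$ in a gyrogroup is $y = z\boxminus x$, a standard identity verified from the left gyroassociative law (G3) together with left cancellation. Since $z\boxminus x = z \ominus \text{gyr}[z,x]x$, and the gyrator identity (Proposition~\ref{Pro:gyr}(3)) writes $\text{gyr}[z,x](x) = \ominus(z\oplus x)\oplus(z\oplus(x\oplus x))$ as a composition of the continuous operations $\oplus$ and $\ominus$, the map $R_x^{-1}(z) = z \oplus (\ominus\,\text{gyr}[z,x]x)$ is continuous in $z$; hence $R_x$ is a homeomorphism.

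Part (4) is then a corollary of the previous parts. The sets $x\oplus A = L_x(A)$, $A\oplus x = R_x(A)$, and $\ominus A = inv(A)$ are images of the open set $A$ under homeomorphisms, and are therefore open. For the product sets, write $B\oplus A = \bigcup_{b\in B} L_b(A)$ and $A\oplus B = \bigcup_{b\in B} R_b(A)$; by parts (2) and (3), each set in the union is open, and arbitrary unions of open sets are open. In summary, the bulk of the work is concentrated in part (3), where the absence of a direct right cancellation law forces one to invoke the cogyrogroup cooperation of Definition~2.4 in order to produce a closed-form, manifestly continuous inverse for $R_x$; the remaining parts follow essentially formally from this together with the involution of inversion and the left cancellation law.
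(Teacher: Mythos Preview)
The paper does not supply its own proof of this proposition; it is quoted verbatim from Atiponrat's paper \cite{Atip} and used as a black box. Your argument therefore cannot be compared line by line against anything in the present paper, but it is a correct self-contained justification of the cited result.

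A few remarks on your approach. Parts (1), (2), and (4) are handled exactly as one would expect and match the standard arguments. For part (3) you take the natural route of exhibiting $R_x^{-1}$ explicitly as $z\mapsto z\boxminus x$, invoking the right cancellation law $(y\oplus x)\boxminus x=y$; this is indeed the standard identity (it follows from (G3) with $c=\ominus x$ together with the loop property (G4), which turns $\text{gyr}[y,x]$ into $\text{gyr}[y\oplus x,\,x]=\text{gyr}[z,x]$). Your continuity argument for $z\mapsto z\boxminus x$ via the gyrator identity of Proposition~\ref{Pro:gyr}(3) is clean and correct. The one point you leave slightly implicit is surjectivity of $R_x$, i.e.\ that $(z\boxminus x)\oplus x=z$; this is the companion right cancellation law in Ungar's theory and is equally standard, so the gap is only expository. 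Interestingly, the present paper later (Lemma~\ref{LEM}) takes a different tack for the related map $L_x^{\boxplus}$, writing it as a composition $L_x\circ R_x\circ(\ominus)\circ L_x\circ(\ominus)$ of already-established homeomorphisms rather than appealing to the cooperation directly; either style works, and yours has the advantage of giving $R_x^{-1}$ in closed form.
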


Recall that a topological space $X$ is said to be homogeneous if for every $x\in X$ and
every $y \in X$, there exists a homeomorphism $f$ of the space $X$ onto itself such that
$f(x)=y$.

In the following theorem, we characterize the families of subsets of a gyrogroup $G$ which can appear as neighborhood bases of the neutral element in some paratopological gyrogroup
and consider the Pontrjagin conditions, which are generalization of Ravsky's results \cite{Rav}.

\begin{theorem}\label{the}
Let $G$ be a Hausdorff paratopological gyrogroup and $\mathcal{U}$ an open base at the neutral element $e$ of $G$.
Then the following conditions hold:
\begin{enumerate}
\item[(1)] for every $U\in\mathcal{U}$, there exists an element $V\in \mathcal{U}$ such that $V\oplus V\subseteq U$;
\item[(2)] for every $U\in\mathcal{U}$, and every $x\in U$, there exists $V\in \mathcal{U}$ such that $x\oplus V\subseteq U$;
\item[(3)] for every $U\in\mathcal{U}$ and $x\in G$, there exists $V\in \mathcal{U}$ such that $\ominus x\oplus(V\oplus x)\subseteq U$;
\item[(4)] for $U, V\in\mathcal{U}$, there exists $W\in\mathcal{U}$ such that $W\subseteq U\cap V$;
\item[(5)] for every $U\in\mathcal{U}$ and $a,b\in G$, there exists an element $V\in \mathcal{U}$ such
that gyr$[a,b]V\subseteq U$;
\item[(6)] for every $U\in\mathcal{U}$ and $b\in G$, there exists an element $V\in \mathcal{U}$ such that $\bigcup_{v\in V}\text{gyr}[v,b]V\subseteq U$;
\item[(7)] $\{e\}=\bigcap_{U\in\mathcal{U}} (U\boxminus U)$.
\end{enumerate}

Conversely, let $G$ be a gyrogroup and let $\mathcal{U}$ be a family of subsets of $G$ satisfying conditions (1)-(7).
Then the family $\mathcal{B}_{\mathcal{U}}=\{a\oplus U:a\in G, U\in \mathcal{U}\}$ is a base for a Hausdorff topology $\mathcal{T}_{\mathcal{U}}$ on $G$.
With this topology, $G$ is a paratopological gyrogroup.
\end{theorem}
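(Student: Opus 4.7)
The plan is to verify each of (1)--(7) in the forward direction using the continuity of $\oplus$ together with the gyrogroup identities of Proposition \ref{Pro:gyr}, and then to build the topology $\mathcal{T}_{\mathcal{U}}$ from $\mathcal{B}_{\mathcal{U}}$ and verify its required properties for the converse. For the forward direction, let $G$ be a Hausdorff paratopological gyrogroup and $\mathcal{U}$ an open base at $e$. Items (1) and (4) are immediate from continuity of $\oplus$ at $(e,e)$ and the base property. For (2), the map $L_x\colon y\mapsto x\oplus y$ is continuous with $L_x(e)=x\in U$, so continuity at $e$ yields the required $V$. For (3), the composite $y\mapsto \ominus x\oplus (y\oplus x)$ equals $L_{\ominus x}\circ R_x$, a continuous self-map of $G$ sending $e$ to $\ominus x\oplus x=e$. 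For (5), the gyrator identity rewrites $\text{gyr}[a,b]$ as $L_{\ominus(a\oplus b)}\circ L_a\circ L_b$, a continuous self-map fixing $e$, so continuity at $e$ supplies $V$.

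The delicate items are (6) and (7). For (6), I would use the consequence $(v\oplus b)\oplus\text{gyr}[v,b](w)=v\oplus(b\oplus w)$ of the gyroassociative law, which shows $\text{gyr}[v,b](w)\in U$ is equivalent to $v\oplus(b\oplus w)\in(v\oplus b)\oplus U$. Note that every left translation $L_b$ is a homeomorphism of $G$ (its inverse $L_{\ominus b}$ is continuous by continuity of $\oplus$), so after left cancellation the inclusion further rewrites as $w\in \text{gyr}[b,v]U$. A small symmetric $V\in\mathcal{U}$ obtained from joint continuity of $(v,w)\mapsto v\oplus(b\oplus w)$ at $(e,e)$, combined with $\text{gyr}[b,e]=\text{id}$, then furnishes the conclusion. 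Item (7) is a restatement of Hausdorffness: for $x\neq e$, separate $x$ from $e$ by some $U\in\mathcal{U}$ and use continuity of $\oplus$ at $(e,e)$ together with $a\boxminus b=a\ominus\text{gyr}[a,b]b$ to shrink $U$ so that $U\boxminus U$ misses $x$.

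For the converse, given $\mathcal{U}$ satisfying (1)--(7), I first verify that $\mathcal{B}_{\mathcal{U}}=\{a\oplus U:a\in G,\ U\in\mathcal{U}\}$ is a base. If $x\in(a\oplus U)\cap(a'\oplus U')$ with $x=a\oplus u=a'\oplus u'$, then (2) supplies $V,V'\in\mathcal{U}$ with $u\oplus V\subseteq U$ and $u'\oplus V'\subseteq U'$, (5) supplies $W,W'\in\mathcal{U}$ with $\text{gyr}[u,a]W\subseteq V$ and $\text{gyr}[u',a']W'\subseteq V'$, and (4) produces $W''\subseteq W\cap W'$; the right gyroassociative rewriting $(a\oplus u)\oplus z=a\oplus(u\oplus\text{gyr}[u,a]z)$ then gives $x\oplus W''\subseteq(a\oplus U)\cap(a'\oplus U')$. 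Continuity of $\oplus$ at $(x_0,y_0)$ reduces to producing $P,Q\in\mathcal{U}$ with $(x_0\oplus P)\oplus(y_0\oplus Q)\subseteq(x_0\oplus y_0)\oplus U$; expanding by the left and right gyroassociative laws yields gyration factors with parameters drawn from $P$ and $Q$, which are absorbed into $U$ using (1), (3), (5), and crucially (6). Hausdorffness follows from (7): if $x\neq e$, pick $U\in\mathcal{U}$ with $x\notin U\boxminus U$, and check that $U$ and $x\oplus U$ are disjoint, with homogeneity via $L_x$ handling general pairs.

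The main obstacle is condition (6) together with its deployment in the converse direction. Because $\ominus$ is not assumed continuous in a paratopological gyrogroup, the term $\ominus(v\oplus b)$ appearing in the gyrator identity is not manifestly well-behaved in the variable $v$; the way out is to convert the desired inclusion into one between open sets under the homeomorphism $L_{v\oplus b}$ and exploit left cancellation. Likewise, the expansion of $(x_0\oplus P)\oplus(y_0\oplus Q)$ in the converse produces gyrations whose parameters range over neighborhoods of points other than $e$, so (6), rather than only (5), is exactly what is needed to control them uniformly by a single basic neighborhood.
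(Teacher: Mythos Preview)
Your plan tracks the paper's argument closely. Items (1)--(5) and (7) in the forward direction are handled just as the paper does (your explicit use of the gyrator identity to see that $\text{gyr}[a,b]=L_{\ominus(a\oplus b)}\circ L_a\circ L_b$ is in fact a fuller justification of continuity in (5) than the paper gives). In the converse you verify the base axiom for $\mathcal{B}_{\mathcal{U}}$ directly, whereas the paper defines $\mathcal{T}=\{W:\forall x\in W\ \exists U\in\mathcal{U}\ (x\oplus U\subseteq W)\}$, checks it is a topology, and then shows each $g\oplus O$ lies in $\mathcal{T}$; these are equivalent packagings. Your continuity and Hausdorff sketches in the converse match the paper's Claims~4 and~5, which also hinge on (3), (5) and (6).

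The one genuine gap is your treatment of (6). The rewriting ``$\text{gyr}[v,b](w)\in U$ iff $w\in\text{gyr}[b,v]U$'' is just the inverse of the desired statement and makes no progress. More seriously, you invoke a ``symmetric $V\in\mathcal{U}$'', but in a \emph{para}topological gyrogroup the inversion $\ominus$ is not assumed continuous, so symmetric basic neighborhoods need not exist. Finally, the map $(v,w)\mapsto v\oplus(b\oplus w)$ you use has value $b$ at $(e,e)$, so joint continuity only places $v\oplus(b\oplus w)$ in a \emph{fixed} neighborhood of $b$, while the target $(v\oplus b)\oplus U$ moves with $v$; passing from one to the other is exactly the difficulty, and $\text{gyr}[b,e]=\text{id}$ does not help for $v\neq e$. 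The paper's route is different: choose $W\in\mathcal{U}$ with $W\oplus W\subseteq U$, note $b\oplus(W\oplus W)$ is an open neighborhood of $b$, and use continuity of $(v,w)\mapsto(b\oplus v)\oplus w$ at $(e,e)$ to get $V\in\mathcal{U}$ with $(b\oplus V)\oplus V\subseteq b\oplus(W\oplus W)$; then rewrite the left side via right gyroassociativity as $b\oplus\bigl(V\oplus\bigcup_{v\in V}\text{gyr}[v,b]V\bigr)$ and cancel $b$ on the left. You should follow that line rather than the one you sketched.
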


\begin{proof}
Let $U\in\mathcal{U}$.

(1) For $G$ is a paratopological gyrogroup, then $op_2:G\times G\rightarrow G$ defined by $op_2(x,y)=x\oplus y$ is continuous.
Because $e\oplus e=e$, and $U\in \mathcal{U}$, there exist neighborhood $O$ and $W$ of $e$ such that $O\oplus W\subseteq U$.
We choose $V\in \mathcal{U}$ such that $V\subseteq O\cap W$.
Then $V\oplus V\subseteq W$.

(2) Let $x\in U$.
We define $R_x:G\rightarrow G$ by $R_x(y)=x\oplus y$.
Since $R_x(e)=x$ and $R_x$ is continuous at $e$, there exists $V\in\mathcal{U}$ such that $x\oplus V=R_x(V)\subseteq U$.

(3) For every $x\in G$, we define left translation map $L_{\ominus x}:G\rightarrow G$ by $L_{\ominus x}(y)=\ominus x\oplus y$.
By the continuous of $L_{\ominus x}$, for every $U\in\mathcal{U}$, there exists the neighborhood $V'$ of $x$ such that $L_{\ominus x}(V')\subseteq U$,
that is $\ominus x\oplus V'=L_{\ominus x}(V')\subseteq U$.
We also define the right translation map $R_{x}:G\rightarrow G$ by $R_{x}(y)=y\oplus x$.
Then $R_{x}(e)=x$.
Because $R_{x}$ is continuous at $e$, for the neighborhood $V'$ of $x$, there exists $V\in \mathcal{U}$ such that $R_{x}(V)\subseteq V'$,
that is $V\oplus x=R_{x}(V)\subseteq V'.$
So we get $\ominus x\oplus (V\oplus x)\subseteq \ominus x\oplus V'\subseteq U$.

(4) It is clear since $\mathcal{U}$ is an open base at $e.$

(5) For every $a, b\in G$,
we define $f_{a,b}:G\rightarrow G$ by $f_{a,b}(x)=\text{gyr}[a,b] x$.
Since $f_{a,b}(e)=e$ and $f_{a,b}$ is continuous at $e$,
for every $U\in\mathcal{U}$, there exists $V\in\mathcal{U}$ such that $f_{a,b}(V)\subseteq U$, that is, gyr$[a,b] V\subseteq U$.

(6) Take $W\in\mathcal{U}$ such that $W\oplus W\subseteq U$.
Then $b\oplus(W\oplus W)$ is an open set containing $b$.
Since $G$ is a paratopological gyrogroup, one can find $V\in\mathcal{U}$ such that $$(b\oplus V)\oplus V\subseteq b\oplus(W\oplus W)\quad\quad\quad(*)$$
Note that $$(b\oplus V)\oplus V=b\oplus (V\oplus \bigcup_{v\in V}\text{gyr}[v,b]V)\quad\quad\quad(**)$$
By (*) and (**) we have
$b\oplus(V\oplus \bigcup_{v\in V}\text{gyr}[v,b]V)\subseteq b\oplus(W\oplus W)$, which means $V\oplus \bigcup_{v\in V}\text{gyr}[v,b]V\subseteq W\oplus W$.
So we can get $\bigcup_{v\in V}\text{gyr}[v,b]V\subseteq W\oplus W\subseteq U$.

(7) We assume that $G$ is Hausdorff.
If $\bigcap_{U\in\mathcal{U}} (U\boxminus U)\neq\{e\}$, then there is $x\in \bigcap_{U\in\mathcal{U}} (U\boxminus U)$ such that $x\neq e$.
Since $G$ is Hausdorff, there are an open set $V_1$ containing $x$ and a open set $V\in \mathcal{U}$ such that $V_1\cap V=\emptyset$. Since $V_1$ is a neighborhood of $x$, $x\oplus e=x$ and $G$ is a paratopological gyrogroup,  one can find $U\in\mathcal{U}$ such that $(x\oplus U)\subseteq V_1$, hence we have that $(x\oplus U)\cap V=\emptyset$.
We choose $W\in \mathcal{U}$ such that $W\subseteq U\cap V$.
Then $(x\oplus W)\cap W=\emptyset$, that is, $x\notin W\boxminus W$, which is a contradiction.

To prove the converse, let $\mathcal{U}$ be a family of subsets of $G$ such that conditions (1)-(7) hold.
Let $\mathcal{T}=\{W\subseteq G: \text{for every~} x\in W~\text{there exists~} U\in\mathcal{U}~\text{such that~} x\oplus U\subseteq W\}.$

{\bf Claim 1.} $\mathcal{T}$ is a topology on $G$.
It is clear that $G\in\mathcal{T}$ and $\emptyset\in\mathcal{T}$.
It also easy to see that $\mathcal{T}$ is closed under unions.
To show that $\mathcal{T}$ is closed under finite intersections, let $V,W\in\mathcal{T}$.
Let $x\in V\cap W$. Since $x\in V\in\mathcal{T}$ and $x\in W\in\mathcal{T}$, there exist $O,Q\in\mathcal{U}$ such that $x\oplus O\subseteq V$ and $x\oplus Q\subseteq W$.
From (5) it follows that there exists $U\in\mathcal{T}$ such that $U\subseteq O\cap Q$.
Then, we have $x\oplus U\subseteq V\cap W$.
Hence, $V\cap W\in\mathcal{T}$, and $\mathcal{T}$ is a topology on $G$.

{\bf Claim 2.} If $O\in \mathcal{U}$ and $g\in G$, then $g\oplus O\in\mathcal{T}$.

Take any $x\in g\oplus O$, then $\ominus g\oplus x\in O$.
By property (2), there exists $V'\in\mathcal{U}$ such that $\ominus g\oplus x\oplus V'\subseteq O$.
For $V'$ and $\ominus g, x\in G$, there exists $V\in\mathcal{U}$ such that $\text{gyr}[\ominus g,x]V\subseteq V'$ by condition (5).
So we have $\ominus g\oplus(x\oplus V)=(\ominus g\oplus x)\oplus \text{gyr}[\ominus g,x]V\subseteq O$,
that is $x\oplus V\subseteq g\oplus O$.
Hence $g\oplus O\in\mathcal{T}$.

{\bf Claim 3.} The family $\mathcal{B}_{\mathcal{U}}=\{a\oplus U:a\in G, U\in \mathcal{U}\}$ is a base for the topology $\mathcal{T}$ on $G$.

Indeed, it follows from Claim 2.

{\bf Claim 4.} The multiplication in $G$ is continuous with respect to the topology $\mathcal{T}$.

Let $a, b$ be arbitrary elements of $G$, and $O$ be any element of $\mathcal{T}$ such that $a\oplus b\in O$.
Then there exists $W\in\mathcal{U}$ such that $(a\oplus b)\oplus W\subseteq O$.
There exists $U\in\mathcal{U}$ such that $a\oplus b\oplus \text{gyr}[a,b]U\subseteq(a\oplus b)\oplus W$ by condition (5).
For $U$ there exists $U_1\in\mathcal{U}$ such that $U_1\oplus U_1\subseteq U$.
For $b$ and $U_1$ there exists $U_2\in\mathcal{U}$ such that $\bigcup_{v\in U_2}\text{gyr}[v,b]U_2\subseteq U_1$ by condition (6).
By condition (4), we can get $U_3\subseteq U_1\cap U_2$.
For $U_3\in\mathcal{U}$, apply (3) to choose $U_4\in\mathcal{U}$ such that $U_4\oplus b\subseteq b\oplus U_3$.
Using condition (6) we can get $U_5\in\mathcal{U}$ such that $\bigcup_{v\in U_5}\text{gyr}[v,b]U_5\subseteq U_3$.
By the condition (4), we get $U_6\subseteq U_4\cap U_5$.
We have
\begin{align*}
\\& a\oplus U_6\oplus (b\oplus U_6)) \quad
\\&= a\oplus U_6\oplus \text{gyr}[a,e](b\oplus U_6)) \quad
\\&\subseteq a\oplus U_6\oplus \bigcup_{v\in U_6}\text{gyr}[a,v](b\oplus U_6)) \quad
\\&= a\oplus (U_6\oplus (b\oplus U_6)) \quad
\\&= a\oplus ((U_6\oplus b)\oplus \bigcup_{v\in U_6}\text{gyr}[v,b]U_6))) \quad
\\&\subseteq a\oplus ((U_4\oplus b)\oplus \bigcup_{v\in U_5}\text{gyr}[v,b]U_5))) \quad
\\&\subseteq a\oplus ((U_4\oplus b)\oplus U_3)  \quad \quad\quad\quad\quad\quad
\\&\subseteq a\oplus ((b\oplus U_3)\oplus U_3)  \quad \quad\quad\quad\quad\quad
\\&= a\oplus (b\oplus (U_3\oplus \bigcup_{v\in U_3}\text{gyr}[v,b]U_3))) \quad
\\&\subseteq a\oplus (b\oplus (U_1\oplus \bigcup_{v\in U_2}\text{gyr}[v,b]U_2))) \quad
\\&\subseteq a\oplus (b\oplus (U_1\oplus U_1)) \quad
\\&\subseteq a\oplus (b\oplus U)  \quad \quad\quad\quad\quad\quad
\\&=a\oplus b\oplus \text{gyr}[a,b]U\quad\quad\quad
\\&\subseteq(a\oplus b)\oplus W \quad\quad\quad
\end{align*}
Since $U_6\in\mathcal{U}$, $a\oplus U_6, b\oplus U_6$ are the neighborhood of $a, b$.
Thus, the multiplications in $G$ is continuous with respect to the topology $\mathcal{T}$.
This proves Claim 4.

{\bf Claim 5.} The gyrogroup $G$ with topology $\mathcal{T}$ is Hausdorff.

For every $x,y\in G$ and $x\neq y$, then $\ominus y\oplus x\neq e$.
There exist $U\in\mathcal{U}$ such that $\ominus y\oplus x\notin U\boxminus U$, which implies $\ominus y\oplus x\oplus U\cap U=\emptyset$.
For $x,y\in G$ and $U\in\mathcal{U}$, there exists $V\in\mathcal{U}$ such that $\text{gyr}[\ominus y, x]V\subseteq U$ by condition (5).
Then we claim that $x\oplus V\cap y\oplus U=\emptyset$, which implies that the gyrogroup $G$ with the topology $\mathcal{T}$ is Hausdorff.

In fact, if  $x\oplus V\cap y\oplus U\neq \emptyset$, then $\ominus y \oplus(x\oplus V)\cap U\neq \emptyset$. Hence, we have that $(\ominus y \oplus x)\oplus \text{gyr}[\ominus y, x]V\cap U \neq \emptyset$. Since $\text{gyr}[\ominus y, x]V\subseteq U$, we have that $(\ominus y \oplus x)\oplus U\cap U \neq \emptyset$. This is a contradiction.

We prove that $G$ is a Hausdorff paratopological gyrogroup with the topology $\mathcal{T}$.


\end{proof}

\begin{lemma}\label{LEM}
Let $G$ be a topological gyrogroup and $x\in G$. Then $L_x^\boxplus(\cdot):G\rightarrow G$ is homeomorphisms, where $L_x^\boxplus(\cdot)$ is defined as: $L_x^\boxplus(y)=x\boxplus y$ for each $y\in G$.
\end{lemma}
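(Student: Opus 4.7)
My plan is to realize $L_x^\boxplus$ as a finite composition of self-homeomorphisms of $G$ that are already available in any topological gyrogroup, namely the inversion map $\ominus$ and the left/right translations $L_x$ and $R_x$ from the preceding proposition. Each of these is known to be a homeomorphism, so exhibiting $L_x^\boxplus$ as such a composition delivers both bijectivity and bicontinuity in one stroke; no separate inverse formula has to be guessed.

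The only algebraic step is to rewrite the coaddition in a gyration-free form. By definition, $x\boxplus y=x\oplus\mathrm{gyr}[x,\ominus y](y)$. Applying the gyrator identity (item (3) of Proposition~\ref{Pro:gyr}) with $a=x$, $b=\ominus y$, $c=y$, and simplifying using $\ominus y\oplus y=e$ together with $x\oplus e=x$, I obtain
\[
\mathrm{gyr}[x,\ominus y](y)=\ominus(x\ominus y)\oplus x,
\]
and consequently
\[
L_x^\boxplus(y)=x\oplus\bigl(\ominus(x\ominus y)\oplus x\bigr).
\]

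Read inside out, this exhibits $L_x^\boxplus$ as the composition
\[
L_x^\boxplus=L_x\circ R_x\circ\ominus\circ L_x\circ\ominus,
\]
i.e.\ successively apply $\ominus$, left-translate by $x$, apply $\ominus$ again, right-translate by $x$, and finally left-translate by $x$. Each of the five factors is a self-homeomorphism of $G$ by the proposition cited just above the lemma, so the composite $L_x^\boxplus$ is a self-homeomorphism as well.

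The main (in fact, only) obstacle is the algebraic simplification of $\mathrm{gyr}[x,\ominus y](y)$; once the closed-form expression in terms of $\oplus$ and $\ominus$ is in hand, the topological conclusion is automatic and no issue of associativity intrudes because every parenthesization in the final formula is fixed explicitly. Note that continuity of the inversion $\ominus$ is used essentially, so this argument requires $G$ to be a \emph{topological} gyrogroup rather than merely paratopological.
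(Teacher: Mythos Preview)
Your proof is correct and essentially identical to the paper's own argument: both use the gyrator identity to rewrite $x\boxplus y$ as $x\oplus\bigl(\ominus(x\ominus y)\oplus x\bigr)$ and then recognize this as the composition $L_x\circ R_x\circ\ominus\circ L_x\circ\ominus$ of known homeomorphisms.
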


\begin{proof}
According to the definition, we have that
\begin{align*}
x\boxplus y&=x \oplus \text{gyr}[x,\ominus y]y
\\&=x \oplus(\ominus(x\ominus y)\oplus(x\oplus(\ominus y \oplus y)))
\\&=x \oplus(\ominus(x\ominus y)\oplus x)
\end{align*}
Hence, $L_x^\boxplus(y)=L_x(R_x(\ominus(L_x(\ominus(y)))))$. Since the operations $L_x, R_x$ and $\ominus$ are homeomorphisms, so is their the compositions.
\end{proof}

\begin{theorem}\label{the1}
Let $G$ be a Hausdorff topological gyrogroup and $\mathcal{U}$ an open base at the neutral element $e$ of $G$.
Then the conditions (1)-(7) in Theorem \ref{the} hold. In addition, the following conditions hold:
\begin{enumerate}
\item[(8)] for every $U\in\mathcal{U}$ and $x\in G$, there exists $V\in \mathcal{U}$ such that $V\boxplus x\subseteq x\oplus U$ and $x \oplus V \subseteq x\boxplus U$;
\item[(9)] for every $U\in\mathcal{U}$, there exists $V\in \mathcal{U}$ such that $\ominus V\subseteq U$.
\end{enumerate}

Conversely, let $G$ be a gyrogroup and let $\mathcal{U}$ be a family of subsets of $G$ satisfying conditions (1)-(9).
Then the family $\mathcal{B}_{\mathcal{U}}=\{a\oplus U:a\in G, U\in \mathcal{U}\}$ is a base for a Hausdorff topology $\mathcal{T}_{\mathcal{U}}$ on $G$.
With this topology, $G$ is a topological gyrogroup.
\end{theorem}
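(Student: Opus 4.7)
The plan is to handle each direction separately. For the forward direction, since any Hausdorff topological gyrogroup is in particular a Hausdorff paratopological gyrogroup, Theorem~\ref{the} immediately yields conditions (1)--(7). Condition (9) is direct from continuity of $\ominus$ at $e$ together with $\ominus e = e$. For (8), I would exploit Lemma~\ref{LEM} together with joint continuity of the cooperation: continuity of $\oplus$ and $\ominus$, combined with the gyrator identity in Proposition~\ref{Pro:gyr}(3), makes gyration, and hence $a\boxplus b=a\oplus\text{gyr}[a,\ominus b]b$, jointly continuous. Thus the map $z\mapsto z\boxplus x$ is continuous at $z=e$ with value $e\boxplus x=x$, which supplies $V_1\in\mathcal{U}$ with $V_1\boxplus x\subseteq x\oplus U$; continuity of $\oplus$ at $(x,e)$, combined with the fact that $x\boxplus U$ is open (since $L_x^{\boxplus}$ is a homeomorphism by Lemma~\ref{LEM}), supplies $V_2\in\mathcal{U}$ with $x\oplus V_2\subseteq x\boxplus U$. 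Condition (4) then produces a common refinement $V\subseteq V_1\cap V_2$ that witnesses both clauses of (8).

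For the converse direction, Theorem~\ref{the} applied to (1)--(7) already supplies the Hausdorff topology $\mathcal{T}_\mathcal{U}$ under which $G$ becomes a paratopological gyrogroup; what remains is to upgrade this to a topological gyrogroup by proving $\ominus$ is continuous on $G$. At $e$ this is immediate from (9). At an arbitrary $x\in G$ the goal is to produce, for each $U\in\mathcal{U}$, some $V\in\mathcal{U}$ with $\ominus(x\oplus V)\subseteq\ominus x\oplus U$. My plan is to unwind the target inclusion via left cancellation: $\ominus(x\oplus v)\in\ominus x\oplus U$ is equivalent to $x\oplus\ominus(x\oplus v)\in U$. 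I would then apply (9) to absorb an $\ominus$, use (8) to pass between $\oplus$-neighborhoods and $\boxplus$-neighborhoods near $x$, and use (5) and (6) to control the resulting gyration uniformly in $v$.

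The hard step will be this last one. The natural identity $\ominus(x\oplus v)=\text{gyr}[x,v](\ominus v\oplus\ominus x)$ from Proposition~\ref{Pro:gyr}(4) introduces a gyration factor $\text{gyr}[x,v]$ whose continuous dependence on $v$ at $v=e$ is not furnished by (5) alone, since (5) gives only pointwise continuity of the evaluation map. The function of condition (8) is precisely to circumvent this. Unpacking $V\boxplus x\subseteq x\oplus U$ via left gyroassociativity and Proposition~\ref{Pro:gyr}(7) rewrites it as the conjugation-type continuity statement $(\ominus x\oplus V)\oplus x\subseteq U$, while unpacking $x\oplus V\subseteq x\boxplus U$ via the identity $a\oplus b=a\boxplus\text{gyr}[a,b]b$ rewrites it as the uniform gyration bound $\{\text{gyr}[x,v]v:v\in V\}\subseteq U$. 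Combining these two algebraic reformulations of (8) with (5), (6), and (9) should supply exactly the control needed to verify $\ominus(x\oplus V)\subseteq\ominus x\oplus U$; carrying out the explicit chain of inclusions, and matching the gyration produced by Proposition~\ref{Pro:gyr}(4) to the one absorbed by (8), is where the bulk of the computational effort lies.
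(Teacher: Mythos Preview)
Your forward direction matches the paper's almost exactly: (1)--(7) via Theorem~\ref{the}, (9) from continuity of $\ominus$ at $e$, and (8) by combining continuity of $z\mapsto z\boxplus x$ at $e$ with openness of $x\boxplus U$ from Lemma~\ref{LEM}. The paper phrases the first half of (8) slightly differently---observing that $(x\oplus U)\ominus x$ is open and contains $e$, hence contains some $V_1\in\mathcal{U}$, which is equivalent to $V_1\boxplus x\subseteq x\oplus U$---but this is the same argument.

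For the converse your plan diverges from the paper's, and the paper's route is substantially shorter. You propose to attack $\ominus(x\oplus V)\subseteq\ominus x\oplus U$ by expanding $\ominus(x\oplus v)=\text{gyr}[x,v](\ominus v\ominus x)$ and then controlling the $v$-dependent gyration through your algebraic reformulations of (8) together with (5), (6), (9). Your reformulations are correct (indeed $\ominus x\oplus(v\boxplus x)=(\ominus x\oplus v)\oplus x$ follows from Proposition~\ref{Pro:gyr}(7), and $x\oplus v=x\boxplus\text{gyr}[x,v]v$ is Theorem~2.5), but you have not actually closed the argument, and the bookkeeping along this path is nontrivial. The paper avoids all of this by exploiting the coaddition inversion law $\ominus(a\boxplus b)=(\ominus b)\boxplus(\ominus a)$: apply the first clause of (8) at the point $\ominus x$ to get $U_1$ with $U_1\boxplus(\ominus x)\subseteq\ominus x\oplus U$; apply (9) to get $U_2$ with $\ominus U_2\subseteq U_1$; apply the second clause of (8) at $x$ to get $V$ with $x\oplus V\subseteq x\boxplus U_2$. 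Then
\[
\ominus(x\oplus V)\subseteq\ominus(x\boxplus U_2)=(\ominus U_2)\boxplus(\ominus x)\subseteq U_1\boxplus(\ominus x)\subseteq\ominus x\oplus U,
\]
and no gyration estimates are needed at all. The insight you are missing is to invoke (8) at $\ominus x$ rather than only at $x$, and to route the computation through $\boxplus$ where inversion is a clean anti-automorphism.
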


\begin{proof}
Let $G$ be a Hausdorff topological gyrogroup and $\mathcal{U}$ an open base at the neutral element $e$ of $G$. Then the conditions (1)-(7) hold by Theorem \ref{the}.

For (8). Since $G$ is a topological gyrogroup,  it is obvious that $(x\oplus U)\ominus x$ is an open set containing the the neutral element $e$ of $G$.  Hence there is a $V_1\in \mathcal{U}$ such that $V_1\subseteq (x\oplus U)\ominus x$, which is equivalent to $V_1\boxplus x\subseteq x\oplus U$. By Lemma \ref{LEM}, we have that $x\boxplus U$ is an open set containing $x$. Since the operation $L_x$ is continuous and $L_x(e)=x$, one can find $V_2\in \mathcal{U}$ such that $L_x(V_2)=x \oplus V_2 \subseteq x\boxplus U$. Take a $V\in \mathcal{U}$ such that $V\subseteq V_1\cap V_2$. Then the set $V$ is the required.

For (9). Since $G$ is a topological gyrogroup, the operation $\ominus$ is continuous. Clearly, $U$ is an open set containing the neutral element $e$, so one can find $V\in\mathcal{U}$ such that $\ominus V\subseteq U$.

To prove the converse, let $\mathcal{U}$ be a family of subsets of $G$ such that conditions (1)-(9) hold.
Let $\mathcal{T}=\{W\subseteq G: \text{for every~} x\in W~\text{there exists~} U\in\mathcal{U}~\text{such that~} x\oplus U\subseteq W\}.$ Then by Theorem \ref{the}
we have that $G$ with the topology $\mathcal{T}$ is a paratopological gyrogroup. Thus It is enough to show that the inverse operation $\ominus: (G, \mathcal{T})\rightarrow (G, \mathcal{T})$ is continuous.

Take any $x\in G$ and any $U\in \mathcal{U}$. By the condition (8), there is $U_1\in \mathcal{U}$ such that $U_1\boxplus(\ominus x)\subseteq \ominus x\oplus U$. For $U_1$, applying the condition (9), one can find $U_2\in \mathcal{U} $ such that $\ominus U_2\subseteq U_1$. For $U_2$, applying the condition (8) again, one can find $V\in  \mathcal{U}$ such that $x\oplus V\subseteq x \boxplus U_2$. Then we have that
\begin{align*}
\ominus(x\oplus V)&\subseteq \ominus(x \boxplus U_2)
\\&=\ominus U_2 \boxplus (\ominus x)
\\&\subseteq U_1\boxplus (\ominus x)
\\&\subseteq \ominus x\oplus U
\end{align*}
Thus we have proved that the inverse operation $\ominus$ is continuous.
\end{proof}

Clearly, every topological gyrogroup $G$ is a paratopological gyrogroup.
In general, paratopological gyrogroups need not be topological gyrogroups.
There exists a gyrogroup $G$ and a family $\mathcal{U}$ of subsets of $G$ satisfying conditions (1)-(7) of Theorem\ref{the}.
And the family $\mathcal{B}_{\mathcal{U}}=\{a\oplus U:a\in G, U\in \mathcal{U}\}$ is a base for a topology $\mathcal{T}_{\mathcal{U}}$ on $G$.
With this topology, $G$ is a paratopological gyrogroup.


\begin{example}
The M$\ddot{o}$bius gyrogroup equipped with the standard topology $\mathcal{T}$ is a
topological gyrogroup (see \cite[Example 2]{Atip}).

Put $\mathcal{U}=\{U_n:n\in \mathbb{N}\}$, where $U_n=\{x\in \mathbb{D}:|x|<\frac{1}{n}\}$ for each $n\in \mathbb{N}$.
Then the family $\mathcal{B}_{\mathcal{U}}=\{a\oplus_M U_n:a\in \mathbb{D}, U_n\in \mathcal{U}\}$ is a base for a Hausdorff topology $\mathcal{T}_{\mathcal{U}}$ on $\mathbb{D}$.
With this topology, $\mathbb{D}$ is a topological gyrogroup. In particular, we have that $\mathcal{T}_{\mathcal{U}}=\mathcal{T}$.


\end{example}

\begin{proof}
Clearly, $\mathcal{U}$ is a local base at the identity 0 when the M$\ddot{o}$bius gyrogroup $\mathbb{D}$ and any $n \in\omega$, let $U_n = \{x \in \mathbb{D} : |x| <\frac{1}{n}\}$.
Let's verify $\mathcal{U} = \{U_n : n \in \omega\}$ satisfying conditions (1)-(9).

(1) For each $n\in\omega$, we put $m=[n+\sqrt{n^2+1}]+1$ such that
$|x\oplus y|=|\frac{x+y}{1+\overline{x}y}|\leq\frac{|x|+|y|}{1-|\overline{x}y|}\leq\frac{2/m}{1-1/m^2}<1/n$ for every $x,y \in U_m $,
that is $x\oplus y\subseteq U_n$. Thus we verify $U_m\oplus U_m\subseteq U_n$.

(2) For each $n\in\omega$ and $x\in U_n$, we put $m=[N]+1$, where $1/{N}=\frac{{1}/{n}-|x|}{1+({1}/{n})|x|}$, such that
for every $y\in U_m$, we have $|x\oplus y|=|\frac{x+y}{1+\overline{x}y}|\leq\frac{|x|+|y|}{1-|\overline{x}y|}<\frac{|x|+1/m}{1-|\overline{x}(1/m)|}<\frac{|x|+1/N}{1-|\overline{x}(1/N)|}
=\frac{|x|+\frac{{1}/{n}-|x|}{1+({1}/{n})|x|}}{1-|\overline{x}\frac{{1}/{n}-|x|}{1+({1}/{n})|x|}|}=1/n$.
So we get $x\oplus y\in U_n$,
that is $x\oplus U_m\subseteq U_n$.

(3) For each $n\in\omega$ and $x\in \mathbb{D}$,  we put $m=[T]+1$, where $1/{T}=\frac{{1}/{n}(1-|x|^2)}{\sqrt{|x|^4+(2-4/n^2)|x|^2+1}}$, such that
for every $v\in U_m$ and $x\in \mathbb{D}$, we have

$|\ominus x\oplus(v\oplus x)|=|\ominus x\oplus \frac{x+v}{1+\overline{v}x}|=|\frac{-x+\frac{x+v}{1+\overline{v}x}}{1-\overline{x}\frac{x+v}{1+\overline{v}x}}|$
$=|\frac{-\overline{v}x^2+v}{1+\overline{v}x-v\overline{x}-x\overline{x}}|<\frac{1}{n}$,

which implies $\ominus x\oplus(U_{m}\oplus x)\subseteq U_n$.

In fact, for the above $v$ and $x$ we have
$$(|x|^4+1-(2/n^2)|x|^2)|v|^2+2(1-1/n^2)|v|^2|x|^2<1/n^2(1-|x|^2)^2.$$
The above formula implies
$$(|x|^4+1-(2/n^2)|x|^2)|v|^2-(1-1/n^2)2Re(\overline{v}^2x^2)<1/n^2(1-|x|^2)^2$$
if and only if $$(-\overline{v}x^2+v)\overline{(-\overline{v}x^2+v)}<1/n^2(1+\overline{v}x-v\overline{x}-\overline{x}x)(\overline{1+\overline{v}x-v\overline{x}-\overline{x}x})$$
if and only if
$$|-\overline{v}x^2+v|^2<1/n^2|1+\overline{v}x-v\overline{x}-\overline{x}x|^2$$
if and only if $$\frac{|-\overline{v}x^2+v|}{|1+\overline{v}x-v\overline{x}-x\overline{x}|}<\frac{1}{n}.$$

(4) For $U_m, U_n\in\mathcal{U}$,  we put $n_1=\text{max}\{m, n\}+1$ such that $W_{n_1}\subseteq U_m\cap U_n$.

(5) For every $U_n\in\mathcal{U}$ and $a,b\in \mathbb{D}$,  we put $m=n+1$ such that for each $v\in U_m$ we have $|\text{gyr}[a,b]v|=|v|<\frac{1}{m}<\frac{1}{n}$,
that is $\text{gyr}[a,b]U_m\subseteq U_n$.

(6) For every $U_n\in\mathcal{U}$ and $b\in \mathbb{D}$, we put $m=n+1$ such that for each $v\in U_m$ we have $|\text{gyr}[v,b]v|=|v|<\frac{1}{m}<\frac{1}{n}$,
that is $\bigcup_{v\in U_m}\text{gyr}[v,b]U_m\subseteq U_n$.

(7) Let $v\in\bigcap_{n\in\omega}( U_n\boxminus U_n)$ and $v\neq 0$. We put $t=[\frac{|v|}{-1+\sqrt{1+|v|^2}}]+1$
such that $|x\boxminus y|=|x\ominus \text{gyr}[x,y]y|=|\frac{x-\frac{1+x\overline{y}}{1+\overline{x}y}y}{1-\overline{x}\frac{1+x\overline{y}}{1+\overline{x}y}y}|\leq\frac{|x|+|y|}{1-|\overline{x}y|}\leq\frac{2/t}{1-1/t^2}<|v|$
for every $x,y\in U_t$, since $|\text{gyr}[x,y]|=1$.
This implies that $v\notin U_t\boxminus U_t$, which is a contradiction.
Thus we verify $\bigcap_{n\in\omega}(U_n\boxminus U_n)=\{0\}$.

Then the family $\mathcal{B}_{\mathcal{U}}=\{a\oplus U_n:a\in \mathbb{D}, U_n\in \mathcal{U}\}$ is a base for the topology $\mathcal{T_U}$ on $\mathbb{D}$.
In fact, $\mathcal{T_U}$ is equal to the standard topology.  We know that $\mathbb{D}$ equipped with the standard topology is a
topological gyrogroup, which further confirms our conclusion.

(8) For each $n\in\omega$ and $x\in \mathbb{D}$, we put $m=\text{max}\{[T]+1, 3n\}$, where $1/{T}=\sqrt{\frac{-B+\sqrt{B^2-4AC}}{2A}}$, $A=(1-|x|^2)^2|x|^2$, $B=(1-|x|^2)^2(1-\frac{1}{n^2}|x|^2+\frac{2|x|}{3n})$ and $C=\frac{1}{n^2}(1-|x|^2)^2(\frac{2|x|}{3n}-1)$ such that
for every $y\in U_m$ and $x\in \mathbb{D}$, we have

$|\ominus x\oplus(y\boxplus x)|=|\ominus x\oplus(y\oplus \text{gyr}[y,\ominus x]x)|=|\ominus x\oplus(y\oplus \frac{1-\overline{x}y}{1-\overline{y}x}x)|
=|\ominus x\oplus\frac{(1-|x|^2)y+(1-|y|^2)x}{1-|x|^2|y|^2}|=$
$\frac{|(|x|^2-1)|y|^2x+(1-|x|^2)y|}{|(1-|x|^2)(1-\overline{x}y)|}<\frac{1}{n}$,
which implies $\ominus x\oplus(U_{m}\boxplus x)\subseteq U_n$, furthermore, $U_{m}\boxplus x\subseteq x\oplus U$.

In fact, for the above $y$ and $x$ we have

$(1-|x|^2)^2|y|^4+(1-|x|^2)^2(1-\frac{1}{n^2}|x|^2)|y|^2+(1-|x|^2)^2\frac{2|x|}{3n}|y|^2+\frac{1}{n^2}(1-|x|^2)^2\frac{2|x|}{3n}$
$$<\frac{1}{n^2}(1-|x|^2)^2$$
The above formula implies

$(1-|x|^2)^2|y|^4+(1-|x|^2)^2(1-\frac{1}{n^2}|x|^2)|y|^2-(1-|x|^2)^2\cdot2\text{Re}(\overline{x}y)|y|^2+\frac{1}{n^2}(1-|x|^2)^2\cdot2\text{Re}(\overline{x}y)$
$$<\frac{1}{n^2}(1-|x|^2)^2$$
if and only if

$[(|x|^2-1)|y|^2x+(1-|x|^2)y]\overline{[(|x|^2-1)|y|^2x+(1-|x|^2)y]}$
$$<1/n^2[(1-|x|^2)(1-\overline{x}y)](\overline{[(1-|x|^2)(1-\overline{x}y)]}$$
if and only if
$$|(|x|^2-1)|y|^2x+(1-|x|^2)y|^2<1/n^2|(1-|x|^2)(1-\overline{x}y)|^2$$
if and only if $$\frac{|(|x|^2-1)|y|^2x+(1-|x|^2)y|}{|(1-|x|^2)(1-\overline{x}y)|}<\frac{1}{n}.$$

For every $U_n\in\mathcal{U}$ and $x\in \mathbb{D}$, we get $V=\bigcup_{u\in U_n}\text{gyr}[x,\ominus x](\text{gyr}[x,\ominus u]u)$.
Since for every $v\in V$, $|v|=|\text{gyr}[x,\ominus x](\text{gyr}[x,\ominus u]u)|=|u|<1/n$, we have $V\in\mathcal{U}$.
Then, for every $v\in V$ and some $u\in U_n$ we have
\begin{align*}
\ominus x\oplus (x \oplus v)&=(\ominus x\oplus x) \oplus \text{gyr}[\ominus x,x]v
\\&=\text{gyr}[\ominus x,x]v
\\&=\text{gyr}[\ominus x,x](\text{gyr}[x,\ominus x](\text{gyr}[x,\ominus u]u))
\\&=\text{gyr}[x,\ominus u]u
\end{align*}
So we get $\ominus x\oplus (x \oplus v)=\text{gyr}[x,\ominus u]u$, that is $x \oplus v= x\oplus \text{gyr}[x,\ominus u]u=x\boxplus u$.
We have proved $x \oplus V\subseteq x\boxplus U$.

(9) For every $U\in\mathcal{U}$, we put $V=U$ such that $U=\ominus U=\ominus V\subseteq U$, since $\ominus U=U$.

At last, we prove $\mathcal{T}_{\mathcal{U}}=\mathcal{T}$.
In fact, for each $r\in (0,1)$ we can get $m=[T]+1$, where $1/{T}=\frac{-a(1-r^2)+r(1-a^2)}{1-r^2a^2}$,
such that
for every $u\in U_m$ and $a\in \mathbb{D}$, we have $|a\oplus u|=|\frac{a+u}{1+\overline{a}u}|<r$.

Since $$(1-r^2|a|^2)|u|^2+2(1-r^2)|a\overline{u}|+|a|^2-|r^2|<0$$ for every $u\in U_m$ and $a\in \mathbb{D}$.

The above formula implies
$$(1-r^2|a|^2)|u|^2+2(1-r^2)2\text{Re} a\overline{u}+|a|^2-|r^2|<0$$
if and only if $$(a+u)\overline{(a+u)}<r^2(1+\overline{a}u)\overline{(1+\overline{a}u)}$$
if and only if
$$|a+u|^2<r^2|1+\overline{a}u|^2$$
if and only if $$\frac{|a+u|}{|1+\overline{a}u|}<r.$$

We prove $a\oplus U_m\subseteq U(0,r)$, that is $\mathcal{T}_{\mathcal{U}}=\mathcal{T}$.
\end{proof}

\begin{lemma}
Let $G$ be a paratopological gyrogroup. For all $a,b\in G$ and every nontrivial normal subgyrogroup $H$ of $G$, the following holds:
$$\text{gyr}[a,b]H=H.$$
\end{lemma}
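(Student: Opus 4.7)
The plan is to reduce the lemma to the standard quotient construction for gyrogroups; the paratopological structure plays no role, so the argument is purely algebraic. Since $H$ is a normal subgyrogroup of $G$, by the theory of Suksumran, the left cosets of $H$ form a quotient gyrogroup $G/H$ and the canonical projection $\pi\colon G \to G/H$, $\pi(x)=x\oplus H$, is a gyrogroup homomorphism with $\ker\pi = H$. In particular, $\pi$ intertwines gyroautomorphisms, i.e.
$$\pi(\mathrm{gyr}[a,b]c)=\mathrm{gyr}[\pi(a),\pi(b)]\pi(c)\qquad\text{for all }a,b,c\in G.$$

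First, I would fix $a,b\in G$ and $h\in H$. Because $\pi(h)=e_{G/H}$ and every gyroautomorphism of $G/H$ fixes the identity, we obtain
$$\pi(\mathrm{gyr}[a,b]h)=\mathrm{gyr}[\pi(a),\pi(b)](e_{G/H})=e_{G/H},$$
so $\mathrm{gyr}[a,b]h\in\ker\pi = H$. This yields the inclusion $\mathrm{gyr}[a,b]H\subseteq H$.

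To upgrade the inclusion to an equality I would invoke Proposition \ref{Pro:gyr}(7): $\mathrm{gyr}[a,b]$ and $\mathrm{gyr}[b,a]$ are mutually inverse automorphisms of $G$. Running the argument above with the pair $(b,a)$ in place of $(a,b)$ gives $\mathrm{gyr}[b,a]H\subseteq H$; applying the bijection $\mathrm{gyr}[a,b]$ to both sides then produces $H\subseteq \mathrm{gyr}[a,b]H$. Combining the two inclusions delivers $\mathrm{gyr}[a,b]H=H$.

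The main obstacle is the reliance on the external fact that a normal subgyrogroup produces a quotient gyrogroup whose canonical projection is a homomorphism; this is not recorded in the excerpt and would either need to be cited from Suksumran's work or replaced by a direct internal argument. The internal alternative would start from the gyrator identity
$$\mathrm{gyr}[a,b](c)=\ominus(a\oplus b)\oplus\bigl(a\oplus(b\oplus c)\bigr)$$
of Proposition \ref{Pro:gyr}(3) applied to $c=h\in H$, repeatedly using $a\oplus H = H\oplus a$ and the closure of $H$-cosets under the action of $\mathrm{gyr}[x,y]$, but the bookkeeping is considerably more technical than the quotient-homomorphism route sketched above.
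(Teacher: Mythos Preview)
Your proof is correct and follows essentially the same route as the paper: realize $H$ as the kernel of a gyrogroup homomorphism (the paper invokes an abstract $\varphi$ with $H=\ker\varphi$, you use the concrete quotient projection $\pi$), then use the fact that homomorphisms intertwine gyroautomorphisms (cited from Suksumran's \cite{Suk3} in both cases) to obtain $\mathrm{gyr}[a,b]H\subseteq H$. The only notable difference is the upgrade to equality: the paper cites an external result (Proposition~2.6 of \cite{Suk3}), whereas your argument via the inversive symmetry $\mathrm{gyr}[a,b]=\mathrm{gyr}^{-1}[b,a]$ of Proposition~\ref{Pro:gyr}(7) is self-contained within the paper and arguably cleaner.
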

\begin{proof}
For $H$ is a nontrivial normal subgyrogroup,
there exists a gyrogroup homomorphisms $\varphi:G\rightarrow K$ between gyrogroups $G$ and $K$ such that $H=\text{ker}\varphi$.
For every $x\in\text{gyr}[a,b]\text{ker}\varphi$, there exists $y\in\text{ker}\varphi$ such that $x=\text{gyr}[a,b]y$.
So $\varphi(x)=\varphi(\text{gyr}[a,b]y)=\text{gyr}[\varphi(a),\varphi(b)]\varphi(y)=\text{gyr}[\varphi(a),\varphi(b)]0=0$ (see \cite[Proposition 5.1]{Suk3}),
which implies $x\in\text{ker}\varphi$.
Thus, we have proved $\text{gyr}[a,b]\text{ker}\varphi\subseteq\text{ker}\varphi$ for all $a,b\in G$.
By Proposition 2.6 \cite{Suk3}, we have $\text{gyr}[a,b]\text{ker}\varphi=\text{ker}\varphi$.
That is $\text{gyr}[a,b]H=H$ For all $a,b\in G$.
\end{proof}

\begin{example}
Let $G$ be a gyrogroup and $$\mathcal{U}=\{U: U\subseteq G, U \text{~is a nontrivial normal subgyrogroup of G}\}$$
Then $\mathcal{U}$ satisfying conditions (1)-(9) and the family $\mathcal{B}_{\mathcal{U}}=\{a\oplus U:a\in G, U\in \mathcal{U}\}$ is a base for the topology $\mathcal{T_U}$ on $G$.
With this topology, $G$ is a topological gyrogroup.
\end{example}
\begin{proof}
(1) For each $U\in\mathcal{U}$, we can take $V=U$ such that $V\oplus V=U\oplus U=U$ by the closure property of groups.

(2) For every $U\in\mathcal{U}$, and every $x\in U$, we can take $V=U$ such that $x\oplus V=x\oplus U=U$ by the closure property of groups.

(3) For every $U\in\mathcal{U}$ and $x\in G$, we can take $V=U$ such that $\ominus x\oplus(V\oplus x)=\ominus x\oplus(U\oplus x)=\ominus x\oplus(x\oplus U)=(\ominus x\oplus x)\oplus \text{gyr}[\ominus x, x]U=U$.

(4) For every $U, V\in\mathcal{U}$, we can take $W=U\cap V\in\mathcal{U}$ such that $W\subseteq U\cap V$, since intersection of normal subgroups is a normal subgroup.

(5) For every $U\in\mathcal{U}$ and $a,b\in G$, there exists $V=U$ such
that $\text{gyr}[a,b]V=\text{gyr}[a,b]U=U$, since the normal subgyrogroup is invariant under the gyroautomorphisms of $G$.

(6) For every $U\in\mathcal{U}$ and $b\in G$, there exists $V=U$ such that $\bigcup_{v\in V}\text{gyr}[v,b]V=\bigcup_{v\in U}\text{gyr}[v,b]U= U$.

(7) Let $v\in\bigcap_{U\in\mathcal{U}}( U\boxminus U)$ and $v\neq 0$.
Then we have $v\in\bigcap_{}( \text{ker}\varphi\boxminus \text{ker}\varphi)$ for every gyrogroup homomorphisms $\varphi:G\rightarrow H$ between gyrogroups $G$ and $H$, since $U$ is a nontrivial normal subgyrogroup.
We put $\varphi=\text{gyr}[a,\ominus v]$, where $a\in G$ and $a\neq v$. It is obvious that $v\notin \text{ker} \varphi$, which is a contradiction.
Thus we verify $\bigcap_{U\in\mathcal{U}}( U\boxminus U)=\{0\}$.

(8) For every $U\in\mathcal{U}$ and $x\in G$,
since $U$ is a nontrivial normal subgyrogroup,
 there exists a gyrogroup homomorphisms $\psi:G\rightarrow H$ between gyrogroups $G$ and $H$ such that $U=\text{ker}\psi$ and $\psi^{-1}(\psi(x))=x\oplus U$.
For every $u\in U$ and $x\in G$, we have $\psi(u\boxplus x)=\psi(u)\boxplus \psi(x)=\psi(x)$ (see \cite[Proposition 5.1]{Suk3}), which means that $u\boxplus x\in x\oplus U$.
So we have proved $U\boxplus x\subseteq x\oplus U$.

Next we prove for every $U\in\mathcal{U}$ and $x\in G$, $x\oplus U= x\boxplus U$.
Let $x\in G$ and $u\in U$. By Definition 2.4, $x\boxplus u=x\oplus\text{gyr}[x,\ominus u]u$.
Since $U$ is a normal subgyrogroup, $\text{gyr}[x,\ominus u]u\in U$ by Lemma 4.6. That is $x\boxplus u\in x\oplus U$.
This proves $x\boxplus U\subseteq x\oplus U$.

On the other hand,
let $x\in G$ and $u\in U$. By Theorem 2.5 $x\oplus u=x\boxplus\text{gyr}[x, u]u$.
By Lemma 4.6, $\text{gyr}[x,u]u\in U$. Hence, $x\oplus u\in x\boxplus U$.
So $x\oplus U\subseteq x\boxplus U$.
We have proved $x\oplus U= x\boxplus U$.

(9) For every $U\in\mathcal{U}$, we put $V=U$ such that $U=\ominus U=\ominus V\subseteq U$, since $U$ is a nontrivial normal subgyrogroup of $G$.
\end{proof}

Let $G$ be a gyrogroup and $\mathcal{U}=\{0\}$.
It is obvious that $\mathcal{U}$ satisfies conditions (1)-(9).
Then the family $\mathcal{B}_{\mathcal{U}}=\{a\oplus U:a\in G, U\in \mathcal{U}\}$ is a base for the topology $\mathcal{T_U}$ on $G$.
In fact, $\mathcal{T_U}$ is the discrete topology.
With this topology, $G$ is a topological gyrogroup.

In addition, let $G$ be a gyrogroup and $\mathcal{U}=\{G\}$.
$\mathcal{U}$ also satisfies conditions (1)-(9).
Then the family $\mathcal{B}_{\mathcal{U}}=\{a\oplus U:a\in G, U\in \mathcal{U}\}$ is a base for the topology $\mathcal{T_U}$ on $G$.
In fact, $\mathcal{T_U}$ is the trivial topology.
With this topology, $G$ is a topological gyrogroup.


\end{document}